\newtheorem{theorem}{Theorem}[section]
\newtheorem{lemma}[theorem]{Lemma}
\newtheorem{corollary}[theorem]{Corollary}
\theoremstyle{definition}
\newtheorem{definition}[theorem]{Definition}
\newtheorem{question}[theorem]{Question}
\theoremstyle{remark}
\newcommand{\C}{\mathbb{C}}
\newcommand{\R}{\mathbb{R}}
\newcommand{\K}{\mathbb{K}}
\newcommand{\N}{\mathbb{N}}
\newcommand{\T}{\mathbb{T}}
\newcommand{\eps}{\varepsilon}
\newcommand{\ext}[1][X^*]{\ensuremath{\mathrm{ext}(B_{#1})}}
\newcommand{\Id}{\mathrm{Id}}
\newcommand{\dist}{\mathrm{dist}}
\newcommand{\DP}{Daugavet property}
\newcommand{\Lip}{\mathrm{Lip}}
\newcommand{\slope}{\ensuremath{\mathrm{slope}}}
\newcommand{\dopu}{{:}\allowbreak\ }
\newcommand{\conv}{\mathrm{conv}}
\newcommand{\cconv}{\overline{\mathrm{conv}}}
\newcommand{\lin}{\mathrm{lin}}
 \DeclareMathOperator{\re}{Re}
 \renewcommand{\leq}{\leqslant}
\renewcommand{\geq}{\geqslant}
\begin{document}

\title{Lipschitz slices and the Daugavet equation for Lipschitz operators}

\author[V.~Kadets]{Vladimir Kadets}
\author[M.~Mart\'{\i}n]{Miguel Mart\'{\i}n}
\author[J.~Mer\'{\i}]{Javier Mer\'{\i}}
\author[D.~Werner]{Dirk Werner}

\address[Kadets ]{Department of Mechanics and Mathematics,
Kharkiv National University, pl.~Svobody~4,
\newline
61077~Kharkiv, Ukraine}
\email{vova1kadets@yahoo.com}
\address[Mart\'{\i}n \& Mer\'{\i}]{Departamento de An\'{a}lisis Matem\'{a}tico \\ Facultad de
 Ciencias \\ Universidad de Granada \\ 18071 Granada, Spain}
\email{mmartins@ugr.es \qquad jmeri@ugr.es}

\address[Werner]{Department of Mathematics, Freie Universit\"at Berlin, Arnimallee~6,
\newline
D-14\,195~Berlin, Germany}
\email{werner@math.fu-berlin.de}

\subjclass[2010]{Primary 46B04. Secondary 46B80,  46B22,
47A12}

\keywords{Numerical radius, numerical index, Daugavet
equation, Daugavet property, SCD space, Lipschitz operator}

%
\thanks{The work of the first named author was partially done during his visit to the University of Granada in June and July 2013 under the support of Spanish MICINN and FEDER project no.\ MTM2012-31755. The second and third authors have been partially supported by Spanish MICINN and FEDER project no.\ MTM2012-31755 and by Junta de Andaluc\'{\i}a and FEDER grants FQM-185 and P09-FQM-4911.}

\date{July 14th, 2014}%

\begin{abstract}
We introduce a substitute for the concept of slice for the case of non-linear Lipschitz functionals and transfer to the non-linear case some results about the Daugavet and the alternative Daugavet equations previously known only for linear operators.
\end{abstract}

\maketitle


\section{Introduction}

Our paper is motivated by a recent paper \cite{Wang-Huang-Tan} by X.~Huang, D.~Tan, and R.~Wang, where the study of numerical range is extended to non-linear Lipschitz operators. They show, among other interesting results that we shall comment on, that the connection between the Daugavet and the alternative Daugavet equations and the numerical range also holds for Lipschitz maps.

The aim of this note is to show that introducing a reasonable substitute for the concept of slice for the case of non-linear Lipschitz functionals and using ideas from \cite{SCD}, one can transfer to the non-linear case several results  about the Daugavet and the alternative Daugavet equations previously known only for linear operators.

Let us settle  the notation and present the definitions and known results which will be relevant to our discussion.
Given a Banach space $X$ over $\K$ ($\K=\R$ or $\K=\C$), we write
$S_X$ for its unit sphere and $B_X$ for its closed unit ball.
The dual space of $X$ is denoted by $X^*$ and $L(X)$ is the
Banach algebra of all bounded linear operators from $X$ to $X$.
The space $X$ has the \emph{Daugavet property} \cite{KSSW} if
every rank-one operator $T\in L(X)$ satisfies the \emph{Daugavet equation}
\begin{equation}\label{DE}\tag{\textrm{DE}}
\|\Id + T\|=1 + \|T\|.
\end{equation}
In this case, all operators on $X$ which do not fix copies of
$\ell_1$ (in particular, weakly compact operators) also satisfy
\eqref{DE} \cite{Shv1}. If every rank-one operator $T\in L(X)$
satisfies the \emph{alternative Daugavet equation}
\begin{equation}\label{aDE}\tag{\textrm{aDE}}
\max_{\theta\in\T}\|\Id + \theta\,T\|=1 + \|T\|
\end{equation}
($\T$ being the set of scalars of modulus one), $X$ has the
\emph{alternative Daugavet property} \cite{MaOi} and then all operators on $X$ which do not fix copies of
$\ell_1$ (in particular, weakly compact operators) also satisfy
\eqref{aDE} \cite{SCD}. Let us remark that, by a convexity argument, $T$ satisfies \eqref{DE} (resp.\ \eqref{aDE}) if and only if $\lambda T$ satisfies \eqref{DE} (resp.\ \eqref{aDE}) for every $\lambda>0$.
A Banach space has \emph{numerical index~$1$}
\cite{D-Mc-P-W} if every $T\in L(X)$ satisfies that
$v(T)=\|T\|$, where
$$
v(T)=\bigl\{|x^*(Tx)| \dopu  x\in S_X,\,x^*\in S_{X^*},\, x^*(x)=1\bigr\}
$$
is the \emph{numerical radius} of the operator $T$. It is known
\cite{D-Mc-P-W} that
$$
v(T)=\|T\| \qquad \Longleftrightarrow \qquad T \text{ satisfies \eqref{aDE}.}
$$
Thus, $X$ has numerical index~$1$ if and only if every $T\in
L(X)$ satisfies \eqref{aDE}.

The space  of all Lipschitz
functions $T\dopu X \longrightarrow Y$ will be equipped with the seminorm
\begin{equation} \label{lipnorm}
\|T\| = \sup\biggl\{ \frac{ \|T(x_1)-T(x_2)\| }{\|x_1-x_2\| }\dopu
x_1\neq x_2 \in X \biggr\}.
\end{equation}
Remark that for a linear operator this seminorm coincides with the standard operator norm.
If one wants to make this space a Banach space one may quotient out the kernel of the seminorm \eqref{lipnorm}, i.e., the subspace of constant
functions, or equivalently, one may consider the Banach space $\Lip_0(X,Y)$ consisting of all Lipschitz functions on $X$ that vanish at $0$, with the Lipschitz constant \eqref{lipnorm} as the actual norm. These two procedures indeed lead to isometric versions of the Banach space of Lipschitz functions which we denote by $\Lip(X,Y)$; by $\Lip(X)$ we denote $\Lip(X, X)$.
The question of which procedure to choose is just a matter taste; maybe the approach using $\Lip_0(X,Y)$ is a little  is easier to visualise.

For a map $T\in \Lip(X, Y)$ we denote
$$
\slope(T):=\biggl\{ \frac{T(x_1)-T(x_2) }{ \|x_1 - x_2\| }\dopu
x_1\neq x_2 \in X\biggr\};
$$
Observe that if $T\in L(X)$, then $\slope(T)=T(S_X)$.

In their paper \cite{Wang-Huang-Tan} (see also \cite{Wang}),  X.~Huang, D.~Tan, and R.~Wang defined a new concept of numerical range for Lipschitz maps (which extends the classical one in the case when the map is a linear operator), and the corresponding notion of Lipschitz numerical index of a Banach space, as follows. The \emph{numerical range} of $T\in \Lip(X)$ is defined as
$$
V(T):=\left\{\frac{f\bigl(Tx - Ty\bigr)}{\|x-y\|^2}\dopu x,y\in X,\, x\neq y,\, f\in X^*, \re f(x-y)=\|f\|\|x-y\|=\|x-y\|^2 \right\}
$$
and the \emph{numerical radius} of $T$ is $v(T)=\sup\{|\lambda|\dopu \lambda\in V(T)\}$. Observe that in the case when $T$ is a linear operator, then the numerical range (and so the numerical radius) coincides with the classical definition. They demonstrated for a Lipschitz map $T$ on a Banach space $X$ the following statements, previously known for linear operators:
\begin{align*}
\|\Id+T\|=1+\|T\| \quad \Longleftrightarrow \quad \sup\re V(T)=\|T\|
\intertext{and}
\max_{\theta\in \T} \|\Id+\theta T\|=1+\|T\| \quad  \Longleftrightarrow \quad
v(T)=\|T\|.
\end{align*}
The \emph{Lipschitz numerical index} of $X$ is also defined in \cite{Wang-Huang-Tan} as
$$
n_L(X)=\inf \bigl\{v(T)\dopu T\in \Lip(X),\, \|T\|=1 \bigr\}.
$$
Observe that we always have that $n_L(X)\leq n(X)$, and it is proved in \cite{Wang-Huang-Tan} that  equality holds for spaces with the Radon-Nikod\'{y}m property and also that $n_L(X)=1$ for a real lush space $X$.
(The notion of a lush space will be recalled in Section~\ref{sec4}.) Besides, they also investigated stability properties of the Lipschitz numerical index, extending to this new setting known results of the (linear) numerical index for $c_0$-, $\ell_1$- and $\ell_\infty$-sums and for some vector valued function spaces.

As we have already mentioned, our goal in this paper is to transfer some known results for linear operators on spaces with the Daugavet property or the alternative Daugavet property to Lipschitz operators. To do so, we introduce the Lipschitz version of the concept of a slice of the unit sphere and prove that this new concept shares some fundamental properties of linear slices (see the fundamental Lemma~\ref{lem:property-Lip-slices}) which allow to show that well-known geometrical characterizations of the Daugavet and the alternative Daugavet properties in terms of slices are also true for Lipschitz slices. This is the content of Section~\ref{sec:2}. With these results as tools, we are able to extend in Section~\ref{sec:3} some ideas of the paper \cite{SCD} to Lipschitz operators, showing that in a Banach space with the Daugavet property (respectively with the alternative Daugavet property), those Lipschitz operators whose slopes are Radon-Nikod\'{y}m sets, or Asplund sets, or CPCP sets, or do not contain copies of $\ell_1$, satisfy \eqref{DE} (resp.\ \eqref{aDE}). As a consequence of this, we show that a Banach space $X$ with numerical index~$1$ has Lipschitz numerical index~$1$ provided it is Asplund, or it has the Radon-Nikod\'{y}m property, or it has the CPCP, or it does not contains copies of $\ell_1$ (for the Radon-Nikod\'{y}m property, this result appeared in \cite{Wang-Huang-Tan}). Finally, Section~\ref{sec4} is devoted to show that (complex) lush spaces have Lipschitz numerical index~$1$, extending \cite[Theorem~2.6]{Wang-Huang-Tan} which was only proved for the real case.

Let us finish this introduction presenting some common notation. For a set $A$ of a Banach space $X$, $\conv(A)$ and $\cconv(A)$ stand for the convex hull and the closed convex hull of $A$, respectively, and $\conv(\T A)$ and $\cconv(\T A)$ are the absolutely convex hull and the absolutely closed convex hull of $A$, respectively. By $\re(\cdot)$ we denote the real part function, understanding that it is just the identity if we are dealing only with real numbers.

\section{Slices and Lipschitz slices}\label{sec:2}

Let $X$ be a Banach space. A \emph{slice} of a set
$A \subset X$  is a non-empty intersection of $A$ with an open half-space. In other words a slice of $A$ is a non-empty set of the form
\begin{equation} \label{linslice}
\{x\in A\dopu   \re x^*(x)> \alpha\},
\end{equation}
where $x^*$ is a non-zero  continuous linear functional and $\alpha \in \R$. Slices of the unit sphere play a crucial role in the geometric approach to the Daugavet and alternative Daugavet equations, thanks to the following two results.

\begin{lemma}[\textrm{a particular case of \cite[Lemma~2.2]{KSSW}}] \label{lemma:DPr}
A Banach space $X$ has the Daugavet property if and
only if for every $x \in S_X$, every $\eps > 0$ and every slice
$S$ of $S_X$, there is $y \in S$ such that $\|x + y\| > 2 - \eps$.
\end{lemma}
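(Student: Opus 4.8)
The plan is to reduce both implications, as usual, to rank-one operators of the form $x^*\otimes x$ (meaning $z\mapsto x^*(z)\,x$), since these are the only operators through which the Daugavet equation ``sees'' the geometry of $S_X$.

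For the forward implication, I would start from a slice $S=\{z\in S_X\dopu \re x^*(z)>\alpha\}$, normalise so that $\|x^*\|=1$ (hence $\alpha<1$, since $S\neq\emptyset$), and apply \eqref{DE} to $T=x^*\otimes x$, which has norm $\|x^*\|\,\|x\|=1$. From $\|\Id+T\|=2$ one obtains, for any prescribed $\delta>0$, a vector $z\in S_X$ with $\|z+x^*(z)\,x\|>2-\delta$, and the triangle inequality $\|z+x^*(z)\,x\|\leq 1+|x^*(z)|$ then forces $|x^*(z)|>1-\delta$. Writing $x^*(z)=|x^*(z)|\,\theta$ with $\theta\in\T$ and replacing $z$ by $\overline{\theta}z$, I may assume $x^*(z)\in(1-\delta,1]$ while still $\|z+x^*(z)\,x\|>2-\delta$; then $z\in S$ as soon as $\delta<1-\alpha$, and $\|x+z\|\geq \|z+x^*(z)\,x\|-|1-x^*(z)|\,\|x\|>2-2\delta$. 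Taking $\delta<\min\{\eps/2,\,1-\alpha\}$ gives the conclusion.

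For the converse, given a rank-one $T\in L(X)$ I would first invoke the scaling remark preceding the lemma to assume $\|T\|=1$, and write $T=x^*\otimes x$ with $\|x^*\|=\|x\|=1$; only $\|\Id+T\|\geq 2$ needs proof, the reverse inequality being trivial. Fix $\eps>0$ and apply the hypothesis to the point $x$ and to the (nonempty, since $\|x^*\|=1$) slice $S=\{z\in S_X\dopu \re x^*(z)>1-\eta\}$ for a small $\eta>0$, obtaining $y\in S$ with $\|x+y\|>2-\eps'$ for a small $\eps'>0$. Since $\re x^*(y)>1-\eta$ and $|x^*(y)|\leq 1$, one has $|x^*(y)-1|^2=|x^*(y)|^2-2\re x^*(y)+1\leq 2\eta$, so $|x^*(y)-1|\leq\sqrt{2\eta}$, whence $\|\Id+T\|\geq \|y+x^*(y)\,x\|\geq \|x+y\|-|x^*(y)-1|>2-\eps'-\sqrt{2\eta}$. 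Choosing $\eta,\eps'$ with $\eps'+\sqrt{2\eta}<\eps$ yields $\|\Id+T\|>2-\eps$ for every $\eps>0$, i.e.\ $\|\Id+T\|=2=1+\|T\|$.

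I expect no genuine obstacle: the argument is standard. The points that need a little care are the unimodular rotation in the complex case (to turn $x^*(z)$ into a positive real number close to $1$), the elementary estimate converting ``$\re x^*(y)$ close to $1$'' into ``$x^*(y)$ close to $1$'', and, in the converse direction, the reduction from a general rank-one operator and a general threshold $\alpha$ to the normalised situation — which is precisely where the scaling remark and the freedom to take $\alpha$ as close to $1$ as we wish are used.
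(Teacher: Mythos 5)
Your proof is correct. The paper itself gives no argument for this lemma --- it is quoted as a particular case of \cite[Lemma~2.2]{KSSW} --- and your reduction to the rank-one operators $z\mapsto x^*(z)x$, with the unimodular rotation of $z$ in the forward direction and the estimate $|x^*(y)-1|\leq\sqrt{2\eta}$ in the converse, is exactly the standard argument from that reference, with all the normalisations ($\|x^*\|=\|x\|=1$, $\alpha<1$, the scaling remark for $\|T\|\neq 1$) handled properly.
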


\begin{lemma}[\textrm{\cite[Proposition~2.1]{MaOi}}] \label{lemma:ADP}
A Banach space $X$ has the alternative Daugavet property if and
only if for every $x \in S_X$, every $\eps > 0$ and every slice
$S$ of $S_X$, there is $y \in S$ such that $\max_{\theta \in
\T}\|x + \theta y\| > 2 - \eps$.
\end{lemma}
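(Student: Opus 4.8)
The plan is to prove both implications by pairing an arbitrary slice of $S_X$ with a canonical norm‑one rank‑one operator. First note that, by the convexity remark recorded above together with the identity $y^*\otimes u=\|y^*\|\,\|u\|\,\bigl((y^*/\|y^*\|)\otimes(u/\|u\|)\bigr)$ (and the trivial case $T=0$), the alternative Daugavet property is equivalent to asking that $\max_{\theta\in\T}\|\Id+\theta T\|=2$ for every operator of the form $T=x^*\otimes x$ with $x^*\in S_{X^*}$ and $x\in S_X$ — here $Tz=x^*(z)\,x$, so $\|T\|=1$. To such data I associate the slice $S=\{z\in S_X\dopu\re x^*(z)>\alpha\}$ with $\alpha<1$, and the two conditions in the statement will appear as two reformulations of the equality $\max_{\theta\in\T}\|\Id+\theta T\|=2$.

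For ``alternative Daugavet property $\Rightarrow$ geometric condition'' I would fix $x\in S_X$, $\eps>0$ and a slice which, after normalizing the defining functional, I may write as $S=\{z\in S_X\dopu\re x^*(z)>\alpha\}$ with $\|x^*\|=1$ and $\alpha<1$. Applying \eqref{aDE} to $T=x^*\otimes x$ and using compactness of $\T$, I get $\theta_0\in\T$ and a sequence $(z_n)$ in $S_X$ with $\|z_n+\theta_0 x^*(z_n)x\|\to2$; as this norm is at most $1+|x^*(z_n)|$, it forces $|x^*(z_n)|\to1$. Rotating, set $w_n=\lambda_n z_n$ with $\lambda_n\in\T$ chosen so that $\lambda_n x^*(z_n)=|x^*(z_n)|$: then $x^*(w_n)\to1$, so $w_n\in S$ for $n$ large, while $\|w_n+\theta_0 x^*(w_n)x\|=\|z_n+\theta_0 x^*(z_n)x\|\to2$. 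Writing $w_n+\theta_0 x^*(w_n)x=\theta_0\bigl(\bar\theta_0 w_n+x+(x^*(w_n)-1)x\bigr)$ and using $|x^*(w_n)-1|\to0$, I conclude $\|x+\bar\theta_0 w_n\|\to2$, so $y=w_n$ with $\theta=\bar\theta_0$ works for $n$ large.

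For the converse I would begin with $T=y^*\otimes u$, $\|y^*\|=\|u\|=1$ (legitimate by the reduction above), fix $\eps>0$ and pick $\delta\in(0,1)$ to be specified. The hypothesis applied with $x:=u$ and the non‑empty slice $S=\{z\in S_X\dopu\re y^*(z)>1-\delta\}$ yields $y\in S$ and $\theta_2\in\T$ with $\|u+\theta_2 y\|>2-\delta$, i.e.\ $\|y+\bar\theta_2 u\|>2-\delta$. Since $y\in S$ forces $y^*(y)\neq0$ and $\bigl|\,|y^*(y)|-1\,\bigr|<\delta$, I absorb the argument of $y^*(y)$ into a unimodular factor: with $\theta:=\bar\theta_2\,\overline{y^*(y)}/|y^*(y)|\in\T$ one has $\theta\,y^*(y)=\bar\theta_2\,|y^*(y)|$, whence
$$
\|(\Id+\theta T)y\|=\|y+\theta\,y^*(y)\,u\|\ \geq\ \|y+\bar\theta_2 u\|-\bigl|\,|y^*(y)|-1\,\bigr|\ >\ 2-2\delta .
$$
As $\|y\|=1$ this gives $\max_{\theta\in\T}\|\Id+\theta T\|>2-2\delta$, and letting $\delta\downarrow0$ yields \eqref{aDE} for $T$, hence the alternative Daugavet property.

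I expect the only genuinely delicate point to be the bookkeeping with unimodular scalars in the complex case: one must keep the \emph{modulus} of $x^*(z)$ — which \eqref{aDE} controls and which forces $\|x+\theta y\|\approx2$ — cleanly separated from its \emph{argument}, which is harmless because it can always be rotated away, both in the choice of $w_n$ above and in the choice of $\theta$ in the converse. In the real case $\lambda_n$ and $\theta$ reduce to signs and one recovers the standard arguments of \cite{KSSW,MaOi}.
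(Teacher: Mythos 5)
Your argument is correct: the reduction to normalized rank-one operators $T=x^*\otimes x$, the extraction of $|x^*(z_n)|\to 1$ from $\|z_n+\theta_0\,x^*(z_n)x\|\to 2$, and the unimodular rotations $\lambda_n$ and $\theta$ that separate the modulus of $x^*(\cdot)$ from its argument all check out in both directions. The paper does not prove this lemma at all --- it simply cites \cite[Proposition~2.1]{MaOi} --- and your proof is essentially the standard slice-versus-rank-one-operator argument from that reference, i.e.\ the alternative-Daugavet analogue of \cite[Lemma~2.2]{KSSW}.
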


We introduce a notion of slice generated by a Lipschitz functional which will play a fundamental role in our discussion; note that non-linear slices were also considered in \cite{SanWer14}.

\begin{definition}
Let $X$ be a Banach space. A \emph{Lip-slice} of $S_X$ is a non-empty set of the form
\begin{equation*}
\left\{\frac{x_1 - x_2}{\|x_1-x_2\|} \dopu x_1\neq x_2, \frac{f(x_1) - f(x_2)}{\|x_1 - x_2\|}>\alpha  \right\},
\end{equation*}
where $f\in \Lip(X,\R)$ is non-zero and $\alpha \in \R$.
The following notation will be useful: for $f\in \Lip(X,\R)\setminus \{0\}$ and $\eps>0$, we write
\[
S(S_X,f,\eps):=\left\{\frac{x_1 - x_2}{\|x_1-x_2\|} \dopu x_1\neq x_2, \frac{f(x_1) - f(x_2)}{\|x_1 - x_2\|}>\|f\| - \eps  \right\}
\]
and observe that this is never empty and so it is a Lip-slice of $S_X$; conversely, every Lip-slice of $S_X$ can be written in this form.
\end{definition}

Remark that for a real-linear functional $f = \re x^*$ with $x^*\in X^*$, the above definition gives a usual slice of $S_X$
\[
S(S_X, \re x^*,\eps):=\left\{x \in S_X \dopu \re x^*(x) > \|x^*\| - \eps  \right\},
\]
which agrees with the formula \eqref{linslice}.

The next result shows that slices generated by Lipschitz functionals behave similarly to those generated by linear functionals.

\begin{lemma}[Fundamental lemma]\label{lem:property-Lip-slices}
Let $X$ be a Banach space, $f\in \Lip(X, \R)$, $\eps>0$, and $A\subset S_X$. If  $\cconv (A) \cap S(S_X,f,\eps)\neq \emptyset$ then $A\cap S(S_X,f,\eps)\neq \emptyset$.
\end{lemma}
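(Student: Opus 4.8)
The plan is to show that a point of $\cconv(A)$ that sits in the Lip-slice $S(S_X,f,\eps)$ can be "pulled back" to a genuine element of $A$ lying in the same Lip-slice. First I would unpack what it means for a point $u$ to be in $S(S_X,f,\eps)$: by definition there exist $x_1\neq x_2$ in $X$ with $u=(x_1-x_2)/\|x_1-x_2\|$ and $(f(x_1)-f(x_2))/\|x_1-x_2\|>\|f\|-\eps$; moreover $u\in\cconv(A)$. Fix $\delta>0$ small (to be chosen depending on $\eps$ and on $\|f\|$ and $\|x_1-x_2\|$). Since $u\in\cconv(A)$, there are points $a_1,\dots,a_n\in A$ and convex weights $\lambda_i\geq 0$, $\sum\lambda_i=1$, with $\|u-\sum_i\lambda_i a_i\|<\delta$.

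The key idea — exactly the trick that makes linear slices work, adapted here — is to translate: consider the points $p=x_2+t\sum_i\lambda_i a_i$ for an appropriate scalar $t>0$, and compare $f$-increments along the segments from $x_2$ to $x_2+t a_i$. Concretely, set $t=\|x_1-x_2\|$ and look at $f(x_2+t a_i)-f(x_2)$ for each $i$. On the one hand, the average $\sum_i\lambda_i\bigl(f(x_2+t a_i)-f(x_2)\bigr)=f(x_2+t\sum_i\lambda_i a_i)-f(x_2)+E$, where the "linearisation error" $E$ comes from the fact that $f$ need not be affine; but $f$ is Lipschitz, so $|E|$ is controlled: $|f(x_2+t\sum\lambda_i a_i)-f(x_1)|\leq \|f\|\cdot t\cdot\|{\sum\lambda_i a_i}-u\|<\|f\|\,t\,\delta$, and similarly one estimates the discrepancy between the average of the $f(x_2+ta_i)$ and $f$ evaluated at the average point using Lipschitzness again after re-centering. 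Since $f(x_1)-f(x_2)>(\|f\|-\eps)t$, choosing $\delta$ small enough forces $\sum_i\lambda_i\bigl(f(x_2+ta_i)-f(x_2)\bigr)>(\|f\|-\eps')t$ for some $\eps'$ slightly larger than $\eps$. An averaging (pigeonhole) argument then yields an index $i$ with $f(x_2+ta_i)-f(x_2)>(\|f\|-\eps')t=(\|f\|-\eps')\|(x_2+ta_i)-x_2\|$ (using $\|a_i\|=1$), i.e.\ the pair $(x_2+ta_i,\,x_2)$ witnesses that $a_i=\bigl((x_2+ta_i)-x_2\bigr)/\|(x_2+ta_i)-x_2\|\in S(S_X,f,\eps')$.

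There is one wrinkle: the averaging gives membership in $S(S_X,f,\eps')$ with $\eps'>\eps$, not in $S(S_X,f,\eps)$ itself. To fix this I would not prove the statement with a fixed $\eps$ directly, but rather run the argument with the target Lip-slice $S(S_X,f,\eps)$ in hand from the start: since $u\in S(S_X,f,\eps)$ the witnessing pair gives strict inequality $(f(x_1)-f(x_2))/\|x_1-x_2\|=\|f\|-\eps+\eta$ for some $\eta>0$, and I choose $\delta$ small enough that all the error terms are bounded by, say, $\eta/2$ times the relevant lengths; then the surviving index $i$ still satisfies the strict inequality $>\|f\|-\eps$, placing $a_i$ in $S(S_X,f,\eps)$ exactly. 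The main obstacle is precisely this bookkeeping — keeping track of how the Lipschitz (rather than affine) nature of $f$ introduces error terms of order $\|f\|\,t\,\delta$ in the translation/averaging step, and verifying that the normalisation factors $\|x_2+ta_i-x_2\|=t\|a_i\|=t$ are exactly right because the $a_i$ lie on $S_X$. Everything else is the standard convexity/pigeonhole routine familiar from the linear case.
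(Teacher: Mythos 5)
There is a genuine gap at the heart of your argument: the claim that the ``linearisation error'' $E$ between $\sum_i\lambda_i f(x_2+ta_i)$ and $f\bigl(x_2+t\sum_i\lambda_i a_i\bigr)$ is controlled by the Lipschitz constant is false. Lipschitzness only gives
\[
\Bigl|\sum_i\lambda_i f(x_2+ta_i)-f\bigl(x_2+t\textstyle\sum_j\lambda_j a_j\bigr)\Bigr|\leq \|f\|\,t\,\sum_i\lambda_i\bigl\|a_i-\textstyle\sum_j\lambda_j a_j\bigr\|,
\]
and the distances $\|a_i-\sum_j\lambda_j a_j\|$ are in no way small (the $a_i$ are scattered over the sphere), so this error is of order $\|f\|\,t$, not of order $\delta$. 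For a concrete failure take $X=(\R^2,\|\cdot\|_\infty)$, $f(x,y)=|y|$, $a_1=(1,1)$, $a_2=(1,-1)$, $\lambda_1=\lambda_2=\tfrac12$: the average of the values at $ta_1, ta_2$ is $t$, while the value at the average point $t(1,0)$ is $0$. Consequently your averaging step does not show that $\sum_i\lambda_i\bigl(f(x_2+ta_i)-f(x_2)\bigr)$ is close to $f(x_1)-f(x_2)$, and the pigeonhole conclusion does not follow. The problem is structural: with a ``star'' of segments all emanating from the single base point $x_2$, there is no identity tying the sum of the increments $f(x_2+ta_i)-f(x_2)$ to the total increment $f(x_1)-f(x_2)$, and for a merely Lipschitz $f$ no such tie can be manufactured.

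The device that makes the paper's proof work, and which your proposal is missing, is to replace the star by a \emph{chain}: starting from $y_1$ one passes successively through $y_1-\lambda_1x_1$, $y_1-\lambda_1x_1-\lambda_2x_2$, \dots, $y_1-\sum_{k=1}^n\lambda_kx_k$. The sum of the resulting $f$-increments telescopes \emph{exactly} (no error term at all) to $f(y_1)-f\bigl(y_1-\sum_k\lambda_kx_k\bigr)$, each increment is $\lambda_\ell$ times the slope of $f$ over a segment whose normalised direction is precisely $x_\ell\in A$, and the only place Lipschitzness is invoked is to compare $f\bigl(y_1-\sum_k\lambda_kx_k\bigr)$ with $f(y_2)$ --- two points at distance less than $\delta$, where the Lipschitz bound genuinely is small. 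The convex-combination/pigeonhole step and your handling of the strict inequality slack (absorbing the $\delta$-error into the gap $\eta$ above $\|f\|-\eps$) are then exactly as in the paper, so the rest of your outline is sound once the star is replaced by the chain.
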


We need a preliminary result which shows that every rescaling of a functional has the same Lipschitz slices.

\begin{lemma}\label{lem:rescaling-Lip-functional}
Let $f\in \Lip(X,\R)$ and $\eps>0$. Then, for every $r>0$ the functional defined by $f_r(x)=\frac{1}{r}f(rx)$ for $x\in X$ satisfies $\|f_r\|=\|f\|$ and
\[
S(S_X,f_r,\eps)=S(S_X,f,\eps).
\]
\end{lemma}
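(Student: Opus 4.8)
The plan is to reduce everything to one elementary observation: the dilation $y\mapsto ry$ is a bijection of $X$ onto itself which multiplies every distance by $r$, and the quotient defining a Lip-slice is insensitive to this because its numerator and its denominator both pick up the same factor $r$. Concretely, for distinct $x_1,x_2\in X$ one computes
\[
\frac{\bigl|f_r(x_1)-f_r(x_2)\bigr|}{\|x_1-x_2\|}=\frac{\bigl|f(rx_1)-f(rx_2)\bigr|}{r\,\|x_1-x_2\|}=\frac{\bigl|f(rx_1)-f(rx_2)\bigr|}{\|rx_1-rx_2\|},
\]
and since $y\mapsto ry$ is onto, taking suprema over all such pairs on both sides gives $\|f_r\|=\|f\|$; in particular $f_r\neq 0$, so its Lip-slices are well defined.

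Next I would establish the two inclusions between $S(S_X,f_r,\eps)$ and $S(S_X,f,\eps)$, which are mirror images of each other. Given $u\in S(S_X,f_r,\eps)$, write $u=\frac{x_1-x_2}{\|x_1-x_2\|}$ with $\frac{f_r(x_1)-f_r(x_2)}{\|x_1-x_2\|}>\|f_r\|-\eps$, and put $y_i=rx_i$. These are distinct, and multiplying numerator and denominator by $r$ shows $\frac{y_1-y_2}{\|y_1-y_2\|}=u$ while the displayed computation (with absolute values removed) gives $\frac{f(y_1)-f(y_2)}{\|y_1-y_2\|}=\frac{f_r(x_1)-f_r(x_2)}{\|x_1-x_2\|}>\|f_r\|-\eps=\|f\|-\eps$; hence $u\in S(S_X,f,\eps)$. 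Conversely, starting from $v\in S(S_X,f,\eps)$ represented by distinct $y_1,y_2$, the points $x_i=y_i/r$ witness $v\in S(S_X,f_r,\eps)$ via exactly the same identities. Therefore the two Lip-slices coincide.

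I expect no real obstacle here; the only thing to be careful about is the bookkeeping, namely that in every quotient the factor $r$ appears simultaneously in the numerator and the denominator, and that the dilation is genuinely a bijection (for which $r>0$ is used). Alternatively one could derive the second inclusion for free by noting that $(f_r)_{1/r}=f$ and invoking the first inclusion, but writing it out directly is equally short.
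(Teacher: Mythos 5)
Your argument is correct and coincides with the paper's own proof: both rest on the identity $\frac{f_r(x_1)-f_r(x_2)}{\|x_1-x_2\|}=\frac{f(rx_1)-f(rx_2)}{\|rx_1-rx_2\|}$, deduce $\|f_r\|=\|f\|$ by taking suprema, and obtain the equality of Lip-slices from the two inclusions (the paper likewise notes the shortcut $(f_r)_{1/r}=f$ for the converse inclusion). Nothing is missing.
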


\begin{proof}
Fix $r>0$ and $x,y\in X$ with $x\neq y$, and observe that
\[
\frac{f_r(x)-f_r(y)}{\|x-y\|}=\frac{f(rx)-f(ry)}{\|rx-ry\|}\,.
\]
Using this it is immediate that $\|f_r\|=\|f\|$. Besides, if $\frac{x-y}{\|x-y\|}\in S(S_X,f_r,\eps)$ then, by the above equality, we have that
\[
\frac{x-y}{\|x-y\|}=\frac{rx-ry}{\|rx-ry\|} \in S(S_X, f, \eps),
\]
which gives the inclusion $S(S_X,f_r,\eps)\subset S(S_X,f,\eps)$. The converse inclusion is proved analogously, or one just observes that $(f_{r})_{1/r}=f$.
\end{proof}

\begin{proof}[Proof of Lemma~\ref{lem:property-Lip-slices}]
Let $y_1,y_2$ be distinct elements in $X$ such that $\frac{y_1-y_2}{\|y_1-y_2\|}\in \overline{\conv} (A) \cap S(S_X,f,\eps)$. By rescaling the functional, we can suppose that $\|y_1-y_2\|=1$:  Indeed,  let $r=\|y_1-y_2\|$ and observe that
\[
f_r\left(\frac{y_1}{\|y_1-y_2\|}\right)- f_r\left(\frac{y_2}{\|y_1-y_2\|}\right)= \frac{f(y_1)-f(y_2)}{\|y_1-y_2\|}\,.
\]
Taking into account Lemma~\ref{lem:rescaling-Lip-functional}, it is easy to observe that the functional $f_r$ and the points $\frac{y_1}{\|y_1-y_2\|}$, $\frac{y_2}{\|y_1-y_2\|}$ satisfy the desired conditions.

Now, we have that
\[
y_1-y_2\in  \overline{\conv} (A) \qquad \text{and} \qquad f(y_1)-f(y_2)>\|f\|-\eps.
\]
So we can find $x_1, \dots, x_n \in A$ and $\lambda_1,\dots, \lambda_n \in [0,1]$ with $\sum_{k=1}^n \lambda_k=1$ satisfying
\[
f(y_1)-f(y_2)-\Bigl\|y_1-y_2-\textstyle{\sum_{k=1}^n \lambda_kx_k}\Bigr\| \|f\|>\|f\|-\eps
\]
and, therefore,
\[
f(y_1)-f(y_2)-\Bigl|f(y_2)-\textstyle{f\bigl(y_1-\sum_{k=1}^n \lambda_kx_k\bigr)}\Bigr|>\|f\|-\eps.
\]
We can write
\begin{eqnarray*}
\lefteqn{\hspace{-2cm}\frac{f(y_1)- f(y_1-\lambda_1x_1)}{\lambda_1}\lambda_1 +\frac{f(y_1-\lambda_1x_1)-f\bigl(y_1-(\lambda_1x_1+\lambda_2x_2)\bigr)} {\lambda_2}\lambda_2}\\
\lefteqn{\hspace{-1cm} +\cdots+\frac{f\bigl(y_1-\sum_{k=1}^{n-1}\lambda_kx_k\bigr)- f\bigl(y_1-\sum_{k=1}^{n}\lambda_kx_k\bigr)}{\lambda_n}\lambda_n}\\
& =& f(y_1)-f(y_2)   +   \Bigl[f(y_2)-\textstyle{f\bigl(y_1-\sum_{k=1}^n \lambda_k x_k\bigr)}\Bigr] \\
&>&\|f\|-\eps.
\end{eqnarray*}
Now, an evident convexity argument gives the existence of $\ell\in\{1,\dots,n\}$ such that
\[
\frac{f\left(y_1-\sum_{k=1}^{\ell-1}\lambda_kx_k\right)- f\left(y_1-\sum_{k=1}^{\ell}\lambda_kx_k\right)}
{\lambda_\ell}>\|f\|-\eps,
\]
understanding that in case $\ell=1$ the element $\sum_{k=1}^{\ell-1}\lambda_kx_k$ is zero. Therefore, we get that
\[
x_\ell= \frac{\left(y_1-\sum_{k=1}^{\ell-1}\lambda_kx_k\right)- \left(y_1-\sum_{k=1}^{\ell}\lambda_kx_k\right)}
{\lambda_\ell}\in S(S_X,f,\eps)
\]
(recall that $\|x_\ell\|=1$), which finishes the proof.
\end{proof}

As a consequence of this result we can show that Lipschitz slices in a space with the Daugavet property present the same wild behaviour as linear ones.

\begin{corollary}\label{cor:D(S)-norming}
Let $X$ be a space with the \DP, let $\eps>0$ and let $S$ be a Lip-slice. Then, for every $x\in S_X$ there is $y\in S$ such that $\|x+y\|>2-\eps$.
\end{corollary}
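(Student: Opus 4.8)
The plan is to deduce the statement from the linear case (Lemma~\ref{lemma:DPr}) by means of the Fundamental Lemma. Write the given Lip-slice as $S=S(S_X,f,\delta)$ for some non-zero $f\in\Lip(X,\R)$ and some $\delta>0$, which is possible by the remark made right after the definition of Lip-slice. Fix $x\in S_X$ and consider the set
\[
A:=\bigl\{y\in S_X\dopu \|x+y\|>2-\eps\bigr\}\subset S_X,
\]
which is non-empty since $x\in A$. What we must produce is a point of $A\cap S$.

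The key step I would carry out is to show that $\cconv(A)=B_X$; once this is available, the non-linear difficulty is already absorbed into the Fundamental Lemma. Since $A\subset B_X$, only $B_X\subset\cconv(A)$ needs an argument, and I would prove it by contradiction: if some $z_0\in B_X$ is not in $\cconv(A)$, Hahn--Banach provides $x^*\in S_{X^*}$ with $\re x^*(z_0)>\beta$, where $\beta:=\sup\{\re x^*(y)\dopu y\in A\}$ (here one uses that $\re x^*$ attains the same supremum on $A$, on $\conv(A)$ and on $\cconv(A)$). As $\re x^*(z_0)\leq\|z_0\|\leq 1$ this forces $\beta<1$, so for any $\beta'\in(\beta,1)$ the set $S':=\{y\in S_X\dopu \re x^*(y)>\beta'\}$ is a genuine (non-empty) slice of $S_X$, which is disjoint from $A$ by the choice of $\beta$. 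But Lemma~\ref{lemma:DPr}, applied to $x$, $\eps$ and the slice $S'$, yields $y\in S'$ with $\|x+y\|>2-\eps$, i.e.\ $y\in A$ --- a contradiction. Hence $\cconv(A)=B_X$.

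To finish, I would simply observe that $S=S(S_X,f,\delta)$ is a non-empty subset of $S_X\subset B_X=\cconv(A)$, so that $\cconv(A)\cap S(S_X,f,\delta)\neq\emptyset$. Applying the Fundamental Lemma (Lemma~\ref{lem:property-Lip-slices}) with this set $A$, this $f$ and this $\delta$, I obtain $y\in A\cap S(S_X,f,\delta)=A\cap S$; by the very definition of $A$ this $y$ satisfies $\|x+y\|>2-\eps$, as required.

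As for where the real work lies: essentially all the non-linearity has been packaged into the Fundamental Lemma, and the Daugavet property enters only through the identity $\cconv(A)=B_X$. Thus the only step that needs genuine care is the separation argument in the middle paragraph --- in particular checking that the auxiliary slice $S'$ is non-empty (this is exactly where $\beta<1$ is used) and that it misses $A$; both verifications are routine.
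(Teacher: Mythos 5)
Your proof is correct and follows essentially the same route as the paper: both reduce the statement to the identity $\cconv\bigl(\{y\in S_X\dopu \|x+y\|>2-\eps\}\bigr)=B_X$ and then apply the Fundamental Lemma~\ref{lem:property-Lip-slices}. The only difference is that the paper simply cites \cite[Lemma~2.2]{KSSW} for that identity, whereas you re-derive it from the slice formulation (Lemma~\ref{lemma:DPr}) by a standard Hahn--Banach separation argument; that derivation is valid, just an extra step the paper delegates to the reference.
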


\begin{proof}
Fix $x\in S_X$. Since $X$ has the \DP\ we have  by \cite[Lemma~2.2]{KSSW}  that
$$
\overline{\conv}(\{y\in S_X \dopu \|x+y\|>2-\eps\})=B_X;
$$
so Lemma~\ref{lem:property-Lip-slices} gives that
\begin{equation*}
\{y\in S_X \dopu \|x+y\|>2-\eps\}\cap S \neq \emptyset.\qedhere
\end{equation*}
\end{proof}

A similar result for the alternative Daugavet property can be produced in the same way.

\begin{corollary}\label{cor:ADP-Lip-slices}
Let $X$ be a space with the alternative Daugavet property, let $\eps>0$ and let $S$ be a Lip-slice. Then, for every $x\in S_X$ there is $y\in S$ such that $\max_{\theta \in
\T}\|x + \theta y\| > 2 - \eps$.
\end{corollary}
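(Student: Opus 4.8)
The plan is to imitate, essentially verbatim, the proof of Corollary~\ref{cor:D(S)-norming}, replacing the geometric characterisation of the \DP\ taken from \cite[Lemma~2.2]{KSSW} by its counterpart for the alternative Daugavet property. Concretely, fix $x\in S_X$ and $\eps>0$ and set
\[
A:=\bigl\{y\in S_X\dopu \max_{\theta\in\T}\|x+\theta y\|>2-\eps\bigr\}.
\]
The first step is to check that $\cconv(A)=B_X$. The second step is then to plug this into the Fundamental Lemma~\ref{lem:property-Lip-slices}: writing the given Lip-slice as $S=S(S_X,f,\delta)$ with $f\in\Lip(X,\R)\setminus\{0\}$ and $\delta>0$, one has $\emptyset\neq S\subset S_X\subset B_X=\cconv(A)$, hence $\cconv(A)\cap S(S_X,f,\delta)\neq\emptyset$, and Lemma~\ref{lem:property-Lip-slices} produces a point $y\in A\cap S$, which is exactly the assertion.

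The only thing that really needs an argument is the identity $\cconv(A)=B_X$. One option is to quote it from \cite{MaOi}, since it is contained in (the proof of) \cite[Proposition~2.1]{MaOi}; alternatively it follows from Lemma~\ref{lemma:ADP} by a routine Hahn--Banach argument, which I would include for completeness. Indeed, suppose $\cconv(A)\subsetneq B_X$ and pick $z\in B_X\setminus\cconv(A)$. Separation yields $x^*\in X^*$ with
\[
\beta:=\sup\{\re x^*(a)\dopu a\in A\}\leq \sup\{\re x^*(w)\dopu w\in\cconv(A)\}<\re x^*(z)\leq\|x^*\|.
\]
Since $\beta<\|x^*\|=\sup_{y\in S_X}\re x^*(y)$, the slice $\{y\in S_X\dopu \re x^*(y)>\beta\}$ of $S_X$ is non-empty and disjoint from $A$, contradicting Lemma~\ref{lemma:ADP}. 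Hence $\cconv(A)=B_X$.

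I do not expect a genuine obstacle here: the whole argument runs parallel to that of Corollary~\ref{cor:D(S)-norming}, the Fundamental Lemma doing all the heavy lifting exactly as before. The one point worth isolating is precisely the convex-hull reformulation of the alternative Daugavet property — in the Daugavet case it is packaged directly into \cite[Lemma~2.2]{KSSW}, whereas in the present case one either cites \cite{MaOi} or inserts the three-line separation argument above.
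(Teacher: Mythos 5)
Your proof is correct and follows essentially the same route as the paper: reduce to the identity $B_X=\cconv\bigl(\{y\in S_X\dopu \max_{\theta\in\T}\|x+\theta y\|>2-\eps\}\bigr)$ and then apply Lemma~\ref{lem:property-Lip-slices}. The paper simply cites \cite[Proposition~2.1]{MaOi} for that identity, whereas you additionally supply the (correct) Hahn--Banach separation argument deriving it from Lemma~\ref{lemma:ADP}; either justification is fine.
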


\begin{proof}
Fix $x\in S_X$. Since $X$ has the alternative Daugavet property, we have by \cite[Proposition~2.1]{MaOi} that
$$
B_X=\overline{\conv}\bigl(\T \{y\in S_X \dopu \|x+y\|>2-\eps\}\bigr)= \overline{\conv}\left(\left\{y\in S_X \dopu \max_{\theta \in
\T}\|x + \theta y\|>2-\eps\right\}\right),
$$
so Lemma~\ref{lem:property-Lip-slices} gives the result.
\end{proof}

\section{The Daugavet equation for Lipschitz operators}\label{sec:3}

We devote this section to obtain some sufficient conditions for a Lipschitz operator to satisfy the Daugavet or the alternative Daugavet equation.

Recall from \cite{SCD} that a bounded subset $A$ of a Banach space $X$ is
an \emph{SCD set} (SCD is an abbreviation for \emph{slicely countably determined}) if there is a \emph{determining  sequence} $\{S_n\dopu n\in\N\}$ of
slices of $A$, i.e., a sequence $\{S_n\dopu n\in\N\}$ such that $A \subset \cconv(B)$ whenever $B \subset
A$ intersects all the $S_n$'s. This property, which clearly implies separability, is possessed by many classes of separable bounded convex subsets, for example by dentable sets (in particular by Radon-Nikod\'{y}m sets), by sets with  the Asplund property, by strongly regular sets, by  CPCP sets, by sets which do not contain $\ell_1$ sequences \cite{SCD} and by the unit ball of any space with a 1-unconditional basis \cite{KMMD2013}. Remark that in \cite{SCD} the property SCD was defined only for convex sets, so for  future applications of results from \cite{SCD} we need the following simple lemma.

\begin{lemma} \label{scd-conv-lem}
If $A \subset X$ and $\cconv A$ is
an SCD set, then $A$ is also an SCD set.
\end{lemma}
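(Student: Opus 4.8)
The plan is to show directly that if $\{S_n : n \in \N\}$ is a determining sequence of slices for $\cconv A$, then the sets $T_n := S_n \cap A$ — discarding those that turn out to be empty — form a determining sequence of slices of $A$. There is one subtlety to address first: a slice of $\cconv A$ need not meet $A$, so some $T_n$ may be empty; but this cannot happen for all $n$, and in fact we only need that the nonempty ones are genuine slices of $A$. By definition, $S_n = \{x \in \cconv A : \re x_n^*(x) > \alpha_n\}$ for some $x_n^* \in X^* \setminus \{0\}$ and $\alpha_n \in \R$, so $S_n \cap A = \{x \in A : \re x_n^*(x) > \alpha_n\}$ is, whenever nonempty, a slice of $A$ in the sense of \eqref{linslice}.

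The key step is verifying the determining property. Let $B \subset A$ be a set that intersects every nonempty $T_n = S_n \cap A$. I want to conclude $A \subset \cconv B$. First I would check that $B$ intersects \emph{every} slice $S_n$ of $\cconv A$: if $S_n \cap A = \emptyset$ then $\sup_{x \in A} \re x_n^*(x) \leq \alpha_n$, hence $\sup_{x \in \cconv A} \re x_n^*(x) \leq \alpha_n$ by linearity and continuity of $\re x_n^*$, contradicting $S_n \neq \emptyset$; so in fact no $S_n$ has empty intersection with $A$, every $T_n$ is nonempty, and $B$ meets all of them. Consequently $B$ intersects every slice in the determining sequence $\{S_n\}$ of $\cconv A$, so by the SCD property of $\cconv A$ we get $\cconv A \subset \cconv B$. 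Since $A \subset \cconv A$, this yields $A \subset \cconv B$, as required. Separability of $A$ follows from that of $\cconv A$ (or is subsumed in the SCD definition).

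The only real point requiring care — the "main obstacle," though it is mild — is the interplay between slices of $A$ and slices of $\cconv A$: one must observe that a half-space functional separates $A$ from a value exactly when it separates $\cconv A$ from that value, which is the standard fact that $\sup_{\cconv A} \re x^* = \sup_A \re x^*$. This is what guarantees both that the $T_n$ are nonempty and that meeting all the $T_n$ is equivalent to meeting all the $S_n$. Everything else is bookkeeping.
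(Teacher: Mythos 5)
Your proof is correct and follows essentially the same route as the paper's: restrict the determining slices $S_n$ of $\cconv A$ to $A$ and check that meeting all the restrictions forces $A\subset\cconv A\subset\cconv(B)$. The only difference is that you spell out, via $\sup_{A}\re x_n^*=\sup_{\cconv A}\re x_n^*$, why each $S_n\cap A$ is nonempty, a point the paper's proof simply asserts.
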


\begin{proof}
 Let $\{S_n\dopu n\in\N\}$ be a determining sequence of
slices of $\cconv A$. Then, the sets $S_n' := S_n \cap A$ are not empty and are slices of $A$. It remains to show that $\{S_n'\dopu n\in\N\}$ is a determining sequence for $A$. This is evident: if $B \subset
A$ intersects all the $S_n'$, then $B$ intersects all the $S_n$, so $A \subset \cconv A \subset \cconv(B)$.
\end{proof}

We need some more notation in the spirit of \cite{SCD}. Given a Banach space $X$, we denote by $K(X^*)$ the intersection of
$S_{X^*}$ with the weak$^*$-closure in $X^*$ of $\ext[X^*]$. We consider $K(X^*)$ as a topological space equipped with the weak$^*$ topology $\sigma(X^*, X)$. For a Lip-slice $S$ of $S_X$ and $\eps>0$, we consider the set
\begin{align*}
D(S,\eps)&:=\{x^*\in K(X^*) \dopu \exists y\in S \text{ with } \re x^*(y)>1-\eps \}\\
&\phantom{:}=\{x^*\in K(X^*) \dopu S \cap S(S_X, \re x^*,\eps)\neq\emptyset\}.
\end{align*}

\begin{lemma} \label{D(S,delta)-lem}
$K(X^*)$ is a Baire space and $D(S, \eps)$ is an open subset of $K(X^*)$ for every $\eps>0$. If, moreover, $X$ has the \DP, then $D(S, \eps)$ is dense in $K(X^*)$.
\end{lemma}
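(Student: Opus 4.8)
The plan is to treat the three assertions in turn; only the density statement requires real work. For the Baire property, let $L$ be the weak$^*$-closure of $\ext[X^*]$ in $X^*$, which is weak$^*$-compact by Alaoglu's theorem, and note that $K(X^*)=L\cap S_{X^*}$. Since $\|\cdot\|$ is weak$^*$-lower semicontinuous and $L\subseteq B_{X^*}$, each set $L\cap\{x^*\dopu\|x^*\|\leq 1-1/n\}$ is weak$^*$-closed, so $L\setminus K(X^*)=\bigcup_n\bigl(L\cap\{x^*\dopu\|x^*\|\leq 1-1/n\}\bigr)$ is an $F_\sigma$ in the compact Hausdorff space $L$, hence $K(X^*)$ is a $G_\delta$ in $L$; and a $G_\delta$ subspace of a compact Hausdorff space is a Baire space (cf.\ \cite{SCD}). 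Openness of $D(S,\eps)$ is immediate: for each $y\in S$ the set $\{x^*\in K(X^*)\dopu\re x^*(y)>1-\eps\}$ is relatively weak$^*$-open in $K(X^*)$, and $D(S,\eps)$ is the union of these over $y\in S$.

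Now assume $X$ has the \DP\ and let us prove that $D(S,\eps)$ is dense in $K(X^*)$. Every extreme point of $B_{X^*}$ has norm one, so $\ext[X^*]\subseteq K(X^*)$, and as $K(X^*)\subseteq L$ this makes $\ext[X^*]$ dense in $K(X^*)$; since $D(S,\eps)\subseteq K(X^*)$, it is therefore enough to show that $D(S,\eps)$ meets every weak$^*$-neighbourhood $U$ of every point $x^*\in\ext[X^*]$. Fix such an $x^*$ and $U$. By Choquet's lemma (the slices of a weak$^*$-compact convex set that contain a prescribed extreme point form a neighbourhood basis at that point) there are $x_0\in S_X$ and $\delta_0>0$ with
\[
x^*\in V:=\{z^*\in B_{X^*}\dopu\re z^*(x_0)>1-\delta_0\}\subseteq U.
\]
Set $\eps':=\min\{\eps,\delta_0\}$. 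Applying Corollary~\ref{cor:D(S)-norming} to the point $x_0$ and the Lip-slice $S$ produces $y\in S$ with $\|x_0+y\|>2-\eps'$.

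The delicate point, and the one I expect to be the main obstacle, is that the witnessing functional must lie in $K(X^*)$, not merely in $S_{X^*}$, so one cannot simply take an arbitrary norming functional for $x_0+y$. Instead, consider the face
\[
F:=\bigl\{z^*\in B_{X^*}\dopu\re z^*(x_0+y)=\|x_0+y\|\bigr\},
\]
which is non-empty by weak$^*$-compactness of $B_{X^*}$, weak$^*$-compact, convex, and a face of $B_{X^*}$. By the Krein--Milman theorem $F$ has an extreme point $e^*$, and an extreme point of a face of $B_{X^*}$ is an extreme point of $B_{X^*}$; hence $e^*\in\ext[X^*]\subseteq K(X^*)$. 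From $\re e^*(x_0+y)=\|x_0+y\|>2-\eps'$ together with $\re e^*(x_0),\re e^*(y)\leq 1$ we obtain $\re e^*(x_0)>1-\eps'\geq 1-\delta_0$ and $\re e^*(y)>1-\eps'\geq 1-\eps$; the first gives $e^*\in V\subseteq U$ and the second, together with $y\in S$, gives $e^*\in D(S,\eps)$. Thus $e^*\in D(S,\eps)\cap U$, completing the argument. Apart from this face/Krein--Milman device and the use of Choquet's lemma, everything is soft point-set topology combined with the already-established Corollary~\ref{cor:D(S)-norming}.
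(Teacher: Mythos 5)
Your proposal is correct and follows essentially the same route as the paper: Baire via the complement of $K(X^*)$ being a first-category $F_\sigma$ in the weak$^*$-compact closure of $\ext[X^*]$, openness as a union of the relatively open sets $\{x^*\dopu \re x^*(y)>1-\eps\}$, and density by reducing to extreme points via Choquet's lemma and then combining Corollary~\ref{cor:D(S)-norming} with a Krein--Milman argument. Your face-plus-extreme-point device is just a slightly more explicit version of the paper's step ``by Krein--Milman there is $y^*\in\ext[X^*]$ with $\re y^*(x+y)>2-\delta$''.
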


\begin{proof}
Let $K'(X^*)$ be the weak$^*$-closure in $X^*$ of $\ext[X^*]$, which is weak$^*$-compact, and observe that
$$
K'(X^*)\setminus K(X^*) = \bigcup_{n\in\N} \bigl[(1 - {\textstyle \frac{1}{n}})B(X^*) \cap K'(X^*)\bigr]
$$
is of the first category in $K'(X^*)$, so $K(X^*)$ is a Baire space.

Since each set $D_y:= \{x^*\in K(X) \dopu \re x^*(y)>1-\eps \}$ is
relatively $\sigma(X^*, X)$-open in $K(X^*)$,  $D(S,\eps) = \bigcup_{y\in S} D_y$, $D(S,\eps)$ is evidently relatively $\sigma(X^*, X)$-open in $K(X^*)$.

Finally, to show that $D(S,\eps)$ is weak$^*$ dense in $K(X^*)$ for a Banach space with the \DP\ it is  sufficient to demonstrate that the weak$^*$ closure of $D(S,\eps)$ contains every extreme point of $B_{X^*}$. Since weak$^*$-slices form a base of (relative) neighborhoods of any extreme point of $B_{X^*}$ (Choquet's lemma, see \cite[Definition~25.3 and
Proposition~25.13]{ChoLecAnal-II}), it is sufficient to prove that every weak$^*$-slice
$$
S(B_{X^*}, x ,\delta):= \{x^*\in B_{X^*} \dopu \re x^*(x)>1-\delta \}
$$
with $\delta \in (0, \eps)$ and $x \in S_X$ intersects $D(S,\eps)$.
To this end, let us use Corollary~\ref{cor:D(S)-norming}:
Given $x$ and $\delta$ as above, there is a $y \in S$ such that $\|x+y\|> 2 - \delta$. By Krein-Milman theorem, there is $y^* \in \ext[X^*]$ such that $\re y^*(x+y) > 2 - \delta$. Therefore, both $\re y^*(x)>1-\delta$ and $\re y^*(y)>1-\delta$, which implies that $y^*\in S(B_{X^*}, x ,\delta) \cap D(S,\eps)$.
\end{proof}

An application of the Baire Theorem gives the following result.

\begin{corollary}\label{cor:denseness-intersection-D(S)}
Let $X$ be a Banach space with the \DP. Given any sequence of Lip-slices $\{S_n\dopu n\in \N\}$ and any sequence $\{\delta_n\dopu n\in \N\}$ of positive numbers, we have that
$\bigcap_{n\in \N} D(S_n,\delta_n)$ is a dense $G_\delta$-subset of $K(X^*)$.
\end{corollary}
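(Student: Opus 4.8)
The plan is to simply combine the three facts already assembled in Lemma~\ref{D(S,delta)-lem} with the Baire category theorem. First I would invoke Lemma~\ref{D(S,delta)-lem} to record that $K(X^*)$, equipped with the weak$^*$ topology, is a Baire space. Next, for each fixed $n\in\N$, the same lemma tells us that $D(S_n,\delta_n)$ is a weak$^*$-open subset of $K(X^*)$ and, since $X$ has the \DP, that it is weak$^*$-dense in $K(X^*)$. Thus $\{D(S_n,\delta_n)\dopu n\in\N\}$ is a countable family of dense open subsets of the Baire space $K(X^*)$.

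Now I would apply the Baire category theorem in its standard form: in a Baire space, the intersection of countably many dense open sets is dense; moreover such an intersection is by definition a $G_\delta$ set, being a countable intersection of open sets. Hence $\bigcap_{n\in\N} D(S_n,\delta_n)$ is a dense $G_\delta$ subset of $K(X^*)$, which is exactly the assertion. There is no real obstacle here — the only point requiring care is that "Baire space" is being used precisely so that the category argument applies to $K(X^*)$ itself (which is merely a Baire subspace of the compact space $K'(X^*)$, not compact in general), and this has already been dealt with in the proof of Lemma~\ref{D(S,delta)-lem}.

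\begin{proof}
By Lemma~\ref{D(S,delta)-lem}, $K(X^*)$ is a Baire space and, for each $n\in\N$, the set $D(S_n,\delta_n)$ is weak$^*$-open in $K(X^*)$; since $X$ has the \DP, each $D(S_n,\delta_n)$ is also weak$^*$-dense in $K(X^*)$. Therefore $\{D(S_n,\delta_n)\dopu n\in\N\}$ is a countable family of dense open subsets of the Baire space $K(X^*)$, and the Baire category theorem yields that $\bigcap_{n\in\N} D(S_n,\delta_n)$ is dense in $K(X^*)$. As a countable intersection of open sets, it is moreover a $G_\delta$-subset of $K(X^*)$.
\end{proof}
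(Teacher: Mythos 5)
Your proposal is correct and coincides with the paper's argument: the paper states the corollary as an immediate application of the Baire category theorem to the open dense sets $D(S_n,\delta_n)$ in the Baire space $K(X^*)$ furnished by Lemma~\ref{D(S,delta)-lem}. Nothing further is needed.
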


We are now ready to state the main result of the present section.

\begin{theorem} \label{lip-scd-dau-thm}
Let $X$ be a Banach space with the \DP. Then, every $T\in \Lip(X)$ for which $\slope(T)$ is an SCD-set satisfies \eqref{DE}.
\end{theorem}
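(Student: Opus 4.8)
The strategy is to reduce the Daugavet equation for $T$ to a statement about Lipschitz slices, exactly as in the SCD machinery of \cite{SCD}. Fix $T\in\Lip(X)$ with $\|T\|=1$ (using the rescaling remark from the introduction, replacing $T$ by $T/\|T\|$). By the Huang--Tan--Wang characterization recalled in the introduction, $\|\Id+T\|=1+\|T\|$ is equivalent to $\sup\re V(T)=\|T\|=1$. Unwinding the definition of $V(T)$, what has to be shown is: for every $\eps>0$ there exist $x\neq y$ in $X$ and $f\in X^*$ with $\re f(x-y)=\|f\|\,\|x-y\|=\|x-y\|^2$ and $\re f(Tx-Ty)/\|x-y\|^2>1-\eps$. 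Normalizing, this amounts to finding a point $u=(x-y)/\|x-y\|\in\slope(T)$ (so $u$ is of the form $Tx_1-Tx_2$ over $\|x_1-x_2\|$ for the slope, but here $u$ is the direction of $x-y$) together with a norm-one functional $f$ attaining its norm at $u$, such that $\re f$ applied to the corresponding slope vector of $T$ exceeds $1-\eps$. The cleanest way to package this is: for every $\eps>0$, there is a functional $g\in\Lip(X,\R)$, namely $g=\re f\circ T$ for a suitable $f$, and a point of $\slope(T)$ lying in the Lip-slice $S(S_X,g,\eps)$ while simultaneously $f$ norms the direction; I will set this up carefully so that the numerical-range condition $\re f(x-y)=\|x-y\|^2$ is exactly encoded.

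The heart of the argument is the SCD reduction. Since $\slope(T)$ is an SCD set, fix a determining sequence $\{S_n:n\in\N\}$ of (linear) slices of $\slope(T)$. For each $n$ write $S_n=\slope(T)\cap S(S_X,\re x_n^*,\delta_n)$ for suitable $x_n^*\in S_{X^*}$ and $\delta_n>0$; since $S_n\neq\emptyset$, each $S_n$ is in particular a Lip-slice of $S_X$ in the sense of our definition (a linear slice is a Lip-slice). Now apply Corollary \ref{cor:denseness-intersection-D(S)}: the set $\bigcap_{n\in\N}D(S_n,\delta_n)$ is a dense $G_\delta$ in $K(X^*)$, hence non-empty. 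Pick any $z^*$ in this intersection. The point of membership in $D(S_n,\delta_n)$ is that $S_n\cap S(S_X,\re z^*,\delta_n)\neq\emptyset$ for every $n$, which — recalling $S_n\subset\slope(T)$ — says that the set $B:=\{u\in\slope(T): \re z^*(u)>1-\delta_n \text{ for the appropriate }n\}$, more precisely the set $B:=\bigcup_n\bigl(\slope(T)\cap S(S_X,\re z^*,\delta_n)\bigr)$ intersects every $S_n$. Hence by the determining property, $\slope(T)\subset\cconv(B)$. Since $z^*$ norms $\cconv(B)$ on a dense set of its radius... — more carefully: since $\re z^*(u)$ can be taken arbitrarily close to $\|z^*\|=1$ on $B$, and $B\subset S_X$, we get $\sup\{\re z^*(u): u\in\slope(T)\}=\sup\{\re z^*(u):u\in\cconv(B)\}\geq\sup\{\re z^*(u):u\in B\}=1$. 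Translating back through the definition of $\slope(T)$ and the numerical range, this furnishes directions $(x-y)/\|x-y\|$ on which $\re z^*$ is close to $1$, i.e. on which $z^*$ nearly attains its norm, and the corresponding numerical range values $\re z^*(Tx-Ty)/\|x-y\|^2$ — wait: here one must be careful, because $z^*$ should norm $x-y$, not $Tx-Ty$.

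I anticipate that the main obstacle is precisely this last bookkeeping: the numerical range $V(T)$ pairs a functional $f$ norming the direction $x-y$ with the value $f(Tx-Ty)/\|x-y\|^2$, so the roles of the two vectors are not symmetric, whereas the SCD argument above naturally produces a functional $z^*$ controlling $z^*$ on $\slope(T)$ itself. The fix, as in \cite{SCD}, is to run the argument not on $\slope(T)$ but with the linear slices chosen so as to encode both conditions simultaneously: one should consider, for a fixed auxiliary $x\in S_X$ with $\|x+u\|>2-\delta$ (available from the Daugavet-property geometry) and $u\in\slope(T)$, an extreme functional $y^*$ with $\re y^*(x+u)>2-\delta$, so that $y^*$ both nearly norms $x$ (hence, by a standard perturbation, nearly norms some genuine direction of difference quotients from the construction of $u$) and satisfies $\re y^*(u)>1-\delta$. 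Combining this with Corollary \ref{cor:D(S)-norming} applied to Lip-slices of the form $S(S_X, \re f\circ T,\eps)$ — which are Lip-slices because $\re f\circ T\in\Lip(X,\R)$ — and with the Baire-category denseness of the $D(S_n,\delta_n)$, one threads both requirements. So the proof runs: (i) normalize $\|T\|=1$; (ii) reduce \eqref{DE} to producing, for each $\eps$, a point of $\slope(T)$ in $S(S_X,\re f\circ T,\eps)$ with $f$ norming the relevant difference direction; (iii) invoke SCD-ness of $\slope(T)$ with a determining sequence of linear slices; (iv) apply Corollary \ref{cor:denseness-intersection-D(S)} to get $z^*\in\bigcap_n D(S_n,\delta_n)\neq\emptyset$; (v) use the determining property to conclude $\slope(T)\subset\cconv(B)$ for the associated set $B$ and extract from $B$ the desired difference quotient; (vi) tie the functional $z^*$ to the numerical-range normalization via the extreme-point/Krein--Milman step already used in Lemma \ref{D(S,delta)-lem}. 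Step (vi), linking the abstract SCD output to the asymmetric definition of $V(T)$, is the delicate point and where I expect to spend the most care.
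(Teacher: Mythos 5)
There is a genuine gap, and it sits exactly where you predicted: the link between the SCD output and the direction being normed. Two concrete problems. First, you apply Corollary~\ref{cor:denseness-intersection-D(S)} directly to the determining slices $S_n$ of $\slope(T)$, "since a linear slice is a Lip-slice". It is not: a Lip-slice of $S_X$ is by definition a set of normalized differences $(x_1-x_2)/\|x_1-x_2\|$, i.e.\ a subset of $S_X$, whereas $S_n\subset\slope(T)\subset \|T\|B_X$ consists of slope vectors that need not be unit vectors, and the density of $D(S,\eps)$ in $K(X^*)$ (Lemma~\ref{D(S,delta)-lem}) is proved via Corollary~\ref{cor:D(S)-norming}, which only applies to Lip-slices of $S_X$. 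Even granting the conclusion, the functional $z^*\in\bigcap_n D(S_n,\delta_n)$ you obtain is large on \emph{slopes} $(Tx-Ty)/\|x-y\|$, and you never get control of $\re z^*$ on the corresponding \emph{directions} $(x-y)/\|x-y\|$ — which is precisely the asymmetry you flag and then defer. The missing idea is the pullback: replace each $S_n=\{z\in\slope(T)\dopu\re x_n^*(z)>1-\eps_n\}$ by the Lip-slice
\[
\widetilde S_n=\Bigl\{\tfrac{x-y}{\|x-y\|}\dopu x\neq y,\ \tfrac{\re x_n^*(Tx)-\re x_n^*(Ty)}{\|x-y\|}>1-\eps_n\Bigr\}=S(S_X,\re x_n^*\circ T,\eps_n),
\]
a genuine Lip-slice of $S_X$ because $\re x_n^*\circ T\in\Lip(X,\R)$. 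Applying Corollary~\ref{cor:denseness-intersection-D(S)} to $\{\widetilde S_n\}$ yields $x^*$ that, for each $n$, nearly norms some direction $(x_n-y_n)/\|x_n-y_n\|$ \emph{whose slope lies in} $S_n$; the set of these slopes intersects every $S_n$, so by the determining property its closed convex hull contains $\slope(T)$, and — after choosing $x^*$ also large on one fixed near-norming slope $(Tu-Tv)/\|u-v\|$ — a convexity extraction produces an index $\ell$ with $\re x^*$ simultaneously $>1-\eps$ on the direction and on the slope of the pair $(x_\ell,y_\ell)$, whence $\|\Id+T\|\geq\re x^*\bigl(\tfrac{x_\ell-y_\ell}{\|x_\ell-y_\ell\|}+\tfrac{Tx_\ell-Ty_\ell}{\|x_\ell-y_\ell\|}\bigr)>2-2\eps$.

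Second, the detour through the Huang--Tan--Wang equivalence $\|\Id+T\|=1+\|T\|\Leftrightarrow\sup\re V(T)=\|T\|$ creates an additional obstacle you do not overcome: membership in $V(T)$ requires $f$ to attain its norm \emph{exactly} on $x-y$ (the condition $\re f(x-y)=\|f\|\,\|x-y\|=\|x-y\|^2$), while the Baire/SCD machinery only delivers approximate norming, $\re x^*\bigl((x-y)/\|x-y\|\bigr)>1-\eps$. Converting approximate to exact norm attainment would need a Bishop--Phelps--Bollob\'as-type perturbation of both the functional and the pair, which you do not supply and which is not needed: the direct lower estimate of $\|\Id+T\|$ displayed above bypasses the numerical range entirely.
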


\begin{proof}
It is sufficient to consider the case of $\|T\|=1$. For $\eps>0$ fixed, let $u,v\in X$ with $u\neq v$ such that
\[
\frac{\|T(u)-T(v)\|}{\|u-v\|}>1-\eps.
\]
Since $\slope(T)$ is an SCD-set, there is a sequence $\{S_n\}$ of slices of $\slope(T)$ which is determining.
For every $n\in \N$, we write
\begin{align*}
S_n &= \{z \in \slope(T) \dopu \re x_n^*(z) > 1 - \eps_n\} \\ &= \left\{\frac{T(x)-T(y)}{\|x-y\|}\dopu x\neq y,\ \frac{\re x_n^*(T(x)) - \re x_n^*(T(y))}{\|x-y\|}>1-\eps_n\right\},
\end{align*}
where $\eps_n>0$ and $x_n^*\in S_{X^*}$.
Next, for each $n\in\N$ we consider the subset of $S_X$ given by
\begin{align*}
\widetilde{S}_n&= \left\{\frac{x-y}{\|x-y\|} \dopu x\neq y,\ \frac{T(x)-T(y)}{\|x-y\|}\in S_n\right\}\\
&=\left\{\frac{x-y}{\|x-y\|} \dopu x\neq y,\ \frac{\re x_n^*(T(x)) - \re x_n^*(T(y))}{\|x-y\|}>1-\eps_n\right\}
\end{align*}
and observe that $\widetilde{S_n}$ is not empty (as $S_n$ is a non-empty subset of $\slope(T)$) and so, as $\re x_n^* \circ T\in \Lip(X,\R)$, $\widetilde{S_n}$ is a Lip-slice of $S_X$. Therefore, we can apply Corollary~\ref{cor:denseness-intersection-D(S)} for the sequence $\{\widetilde{S}_n\dopu n\in \N\}$ to obtain that
the set
\[
A=\bigcap_{n\in\N} D(\widetilde{S}_n,\eps)
\]
is dense in $K(X^*)$. Thus, there is $x^*\in A$ such that
\begin{equation}\label{eq:thm-SCD:x^*-big-on-cuasi-norming-element}
\re x^*\left(\frac{T(u)-T(v)}{\|u-v\|}\right)>1-\eps.
\end{equation}
Besides, since $x^*\in A$, for every $n\in \N$ we have that $S(S_X, \re x^*,\eps)\cap \widetilde{S}_n\neq \emptyset$. Hence, for every $n\in \N$ we can find distinct $x_n,y_n \in X$ such that
\[
\frac{x_n-y_n}{\|x_n-y_n\|}\in S(S_X, \re x^*,\eps) \qquad \text{and} \qquad \frac{T(x_n)-T(y_n)}{\|x_n-y_n\|}\in S_n.
\]
Therefore, using that the sequence $\{S_n\}$ is determining for $\slope(T)$, we get that
\[
\slope(T)\subset \overline{\conv}\left\{\frac{T(x_n)-T(y_n)}{\|x_n-y_n\|} \dopu n\in \N\right\}.
\]
This, together with \eqref{eq:thm-SCD:x^*-big-on-cuasi-norming-element}, allows us to find $\lambda_1,\dots,\lambda_N\geq0$ with $\sum_{k=1}^N \lambda_k=1$ such that
\[
\re x^*\left(\sum_{k=1}^N\lambda_k \frac{T(x_k)-T(y_k)}{\|x_k-y_k\|}\right)>1-\eps.
\]
So, by convexity, there is $\ell\in \{1,\dots, N\}$ satisfying
\[
\re x^*\left(\frac{T(x_\ell)-T(y_\ell)}{\|x_\ell-y_\ell\|}\right)>1-\eps.
\]
Recalling that $\frac{x_\ell-y_\ell}{\|x_\ell-y_\ell\|}\in S(S_X, \re x^*,\eps)$, we can write
\begin{align*}
\|\Id+T\|\geq \left\|\frac{x_\ell-y_\ell}{\|x_\ell-y_\ell\|}+\frac{T(x_\ell)-T(y_\ell)}{\|x_\ell-y_\ell\|}\right\|
\geq \re x^*\left(\frac{x_\ell-y_\ell}{\|x_\ell-y_\ell\|}+\frac{T(x_\ell)-T(y_\ell)}{\|x_\ell-y_\ell\|}\right)>2-2\eps.
\end{align*}
Letting $\eps\rightarrow 0$, we get $\|\Id+T\|\geq 2$, which finishes the proof since the converse inequality always holds.
\end{proof}

The condition that $\slope(T)$ is an SCD-set of Theorem~\ref{lip-scd-dau-thm} means, in particular, that $T$ has separable image. In order to get rid of this separability restriction, one can use the following lemma.

\begin{lemma} \label{separable-det-lem}
Let $X$ be a Banach space with the \DP\ and let $T\in \Lip(X)$. Then there is a separable subspace $E \subset X$ having the \DP, with $T(E) \subset E$ and $\|T|_E\| = \|T\|$.
\end{lemma}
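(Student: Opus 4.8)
The plan is to build $E$ as the union of an increasing chain of separable subspaces, alternating two closure operations: one that forces $T$-invariance, and one that forces the Daugavet property via the slice characterization of Lemma~\ref{lemma:DPr}. Recall the standard separable-reduction technique for the Daugavet property (used e.g.\ in \cite{KSSW}): if $Z$ is a separable subspace of a space with the \DP, then one can find a larger separable subspace $Z'\supset Z$ such that for every $z\in S_Z$, every rational $\eps>0$, and every slice of $S_{Z'}$ determined by a functional from a fixed countable norming-type family, there is a witness $y\in Z'$ with $\|z+y\|>2-\eps$; iterating and closing up yields a separable subspace with the \DP. The extra ingredient here is that we must simultaneously keep the subspace invariant under the (nonlinear, but Lipschitz, hence separably-valued on separable sets) map $T$.

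First I would fix a countable dense set $D_0$ in an arbitrary separable subspace $E_0$ (for instance the closed linear span of a single point and its image, to make $E_0$ nonzero). Given a separable subspace $E_n$, I would construct $E_{n+1}$ in two stages. Stage one: enlarge $E_n$ to a separable subspace containing $T(E_n)$ — this is possible because $T$ is continuous and $E_n$ is separable, so $T(E_n)$ is separable, and we adjoin a countable dense subset of it. Stage two: apply the separable-determination lemma for the \DP\ (Lemma~2.3 or the remark following Lemma~2.2 in \cite{KSSW}, which is exactly the countable version of Lemma~\ref{lemma:DPr}) to enlarge further to a separable $E_{n+1}$ so that for a fixed countable dense set in $S_{E_n}$, each rational $\eps$, and each element of a countable family of functionals that norms $E_n$, the required almost-diametral witnesses can be found inside $E_{n+1}$. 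Then set $E=\overline{\bigcup_n E_n}$.

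It remains to check three things. That $T(E)\subset E$: any $x\in E$ is a limit of points $x_k\in E_{n_k}$, so $T(x_k)\in E_{n_k+1}\subset E$, and $T(x)=\lim T(x_k)\in E$ by continuity and closedness. That $\|T|_E\|=\|T\|$: this is the one point requiring a little care, since the original $u,v$ realizing (almost) the Lipschitz norm need not lie in $E_0$. I would handle it by a third interleaved enlargement: at step $n$ also adjoin, for each pair $(p,q)$ of points in a fixed countable dense subset of $X$ with $p\neq q$, the points $p$ and $q$ themselves — more precisely, build a countable dense subset $\{(p_j,q_j)\}$ of $X\times X$ off the diagonal once and for all, and at stage $n$ put $p_n,q_n$ into $E_{n+1}$. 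Since $\{(p_j,q_j)\}$ is dense and $x\mapsto \|T(x)-T(x')\|/\|x-x'\|$ depends continuously on $(x,x')$ off the diagonal, the supremum over $E\times E$ of this ratio is at least the supremum over all the $(p_j,q_j)$, which equals $\|T\|$; the reverse inequality is trivial. That $E$ has the \DP: by Lemma~\ref{lemma:DPr} it suffices to verify the slice condition in $E$, and this follows from the stage-two construction together with a routine density/approximation argument — an arbitrary $x\in S_E$, $\eps>0$, and slice of $S_E$ are approximated by data that was handled at some finite stage, using that functionals on $E$ restrict to functionals on each $E_n$ and that the fixed countable families were chosen norming.

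The main obstacle, and the only place beyond bookkeeping, is the verification that $E$ inherits the \DP: one must be careful that the countable collection of functionals used to generate the slices at each finite stage is rich enough that in the limit \emph{every} slice of $S_E$ is caught. The clean way around this is to observe that $E_n\hookrightarrow E_{n+1}$ lets every $x^*\in S_{E^*}$ be approximated (in the relevant weak$^*$ sense, on the separable predual pieces) by functionals coming from a countable weak$^*$-dense subset of each $B_{E_n^*}$, and that the slice condition is stable under such approximation; this is exactly the argument already present in \cite{KSSW}, so I would simply cite it. I would therefore present the proof as: (i) set up the three interleaved enlargements, (ii) take the closed union, (iii) dispatch $T$-invariance and the norm equality as above, and (iv) invoke the separable-determination argument of \cite{KSSW} for the \DP.
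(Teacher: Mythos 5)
Your overall strategy --- alternating enlargements that force $T$-invariance and the Daugavet property, then taking the closed union --- is exactly the paper's. The paper simply cites \cite[Theorem~4.5]{KadSW2} for the fact that every separable subspace of a space with the \DP\ is contained in a separable subspace with the \DP; this is the separable-determination step you sketch and then propose to cite, so that part is fine (though the reference is to the ``Narrow operators and rich subspaces'' paper rather than to \cite{KSSW}). The passage of the \DP\ to the closed increasing union via Lemma~\ref{lemma:DPr} is the routine approximation you describe; note that you can simplify your worry about whether ``every slice of $S_E$ is caught'': since each $E_n$ genuinely has the \DP\ as a Banach space, for a slice of $S_E$ given by $x^*\in S_{E^*}$ one just restricts $x^*$ to a suitable $E_n$ on which its norm is close to $1$, normalizes, and applies the \DP\ of $E_n$; no countable family of functionals has to be tracked across stages.

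There is, however, one step that fails: your device for guaranteeing $\|T|_E\|=\|T\|$. You propose to fix ``a countable dense subset $\{(p_j,q_j)\}$ of $X\times X$ off the diagonal once and for all'' and feed these pairs into the construction. Such a set exists only when $X$ is separable, and in that case the whole lemma is vacuous (take $E=X$); the nonseparable case is precisely the one the lemma is for. The correct and simpler move, which is what the paper does, is to choose a single sequence of pairs $(x_{2k-1},x_{2k})$ with $\|T(x_{2k})-T(x_{2k-1})\|/\|x_{2k}-x_{2k-1}\|\to\|T\|$ --- such a sequence exists by the definition of $\|T\|$ as a supremum, with no density assumption on $X$ --- and to seed the chain with $X_1=\overline{\lin}\{x_k\}_{k\in\N}$. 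With that repair your argument is complete and coincides with the paper's proof.
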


\begin{proof}
Fix a sequence $(x_k) \subset X$ such that
\begin{equation} \label{separable-det-eq1}
\lim_{k\to\infty}  \frac{ \|T(x_{2k})-T(x_{2k-1})\| }{\|x_{2k}-x_{2k-1}\| } =  \|T\|
\end{equation}
and define recursively two sequences of separable subspaces $X_n \subset X$ and $E_n \subset X$,
$$
X_1 \subset E_1 \subset X_2 \subset  E_2 \, \ldots,
$$
in the following way. Put $X_1 = \overline\lin \{x_k\}_{k \in \N} $. When a separable subspace $X_n$ is defined, we select a separable subspace $E_n \supset X_n$ having the \DP\ (the possibility of such a choice  is guaranteed by \cite[Theorem~4.5]{KadSW2}) and put $X_{n+1} =  \overline \lin(E_n \cup T(E_n))$. Under this  construction $E = \overline{\bigcup_{n \in \N} E_n}$ is the subspace we need (the Daugavet property of $E$ easily follows from the Daugavet property of the $E_n$ and from Lemma~\ref{lemma:DPr}).
\end{proof}

\begin{corollary}\label{cor:Deq-rnp}
Let $X$ be a Banach space with the \DP, $T\in \Lip(X)$ and suppose that $\cconv(\slope(T))$ has one of the following properties: Radon-Nikod\'{y}m property, Asplund property, CPCP or absence of $\ell_1$-sequences. Then $T$ satisfies \eqref{DE}.
\end{corollary}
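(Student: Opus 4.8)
The plan is to reduce the statement to the separable situation, where the list of examples of SCD sets collected from \cite{SCD} applies, and then to invoke Theorem~\ref{lip-scd-dau-thm}. Since $\|\Id+T\|\leq 1+\|T\|$ always holds, it is enough to prove the reverse inequality, and by the convexity remark in the introduction we may assume $\|T\|=1$ (the case $T=0$ being trivial).

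First I would apply Lemma~\ref{separable-det-lem} to obtain a separable subspace $E\subseteq X$ with the \DP\ such that $T(E)\subseteq E$ and $\|T|_E\|=\|T\|=1$; from now on all the work takes place inside $E$. One has $\slope(T|_E)\subseteq\slope(T)$, hence $\cconv(\slope(T|_E))\subseteq\cconv(\slope(T))$ (this closed convex hull is the same whether formed in $E$ or in $X$, since $E$ is a closed subspace), and $\cconv(\slope(T|_E))$ is separable because $E$ is. Next I would use that each of the four properties appearing in the statement --- the Radon-Nikod\'{y}m property, the Asplund property, the CPCP, and the absence of $\ell_1$-sequences --- is inherited by closed convex subsets: for the first three this is immediate from the respective definitions, and for the last it is trivial. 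Consequently $\cconv(\slope(T|_E))$ is a separable bounded closed convex set enjoying the same property as $\cconv(\slope(T))$, hence an SCD set by the facts recalled before Lemma~\ref{scd-conv-lem}; and then Lemma~\ref{scd-conv-lem} shows that $\slope(T|_E)$ is an SCD set as well.

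Finally I would apply Theorem~\ref{lip-scd-dau-thm} to the space $E$, which has the \DP, and to $T|_E\in\Lip(E)$, to get $\|\Id_E+T|_E\|=1+\|T|_E\|=2$. Since the Lipschitz constant of a map over a subset never exceeds its Lipschitz constant over the ambient space, $\|\Id_X+T\|\geq\|\Id_E+T|_E\|=2=1+\|T\|$, which together with the trivial estimate yields \eqref{DE}. The only step that is more than bookkeeping is the separable reduction, but it is already packaged in Lemma~\ref{separable-det-lem}; what remains --- the heredity of the four properties under closed convex subsets and the monotonicity of the Lipschitz norm under restriction --- is routine.
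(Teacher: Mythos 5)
Your proof is correct and follows essentially the same route as the paper: restrict to the separable Daugavet subspace $E$ provided by Lemma~\ref{separable-det-lem}, note that $\cconv(\slope(T|_E))\subset\cconv(\slope(T))$ inherits the relevant property and is separable, hence SCD, pass to $\slope(T|_E)$ via Lemma~\ref{scd-conv-lem}, apply Theorem~\ref{lip-scd-dau-thm} on $E$, and conclude by monotonicity of the norm under restriction. The extra details you supply (normalization and the heredity of the four properties under closed convex subsets) are correct but are left implicit in the paper's version.
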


\begin{proof}
Let $E \subset X$ be the subspace from Lemma~\ref{separable-det-lem}. Denote by  $T_E \in \Lip(E)$ the restriction of $T$ to $E$. Then $\cconv(\slope(T_E)) \subset \cconv(\slope(T))$, so $\cconv(\slope(T_E))$ also has one of the properties listed above and hence, as it is also separable,  $\cconv(\slope(T_E))$ is an SCD-set; see \cite{SCD}.
Thanks to Lemma~\ref{scd-conv-lem} this means that $\slope(T_E)$ is an SCD-set, so by Theorem~\ref{lip-scd-dau-thm} $T_E$  satisfies the Daugavet equation. Consequently,
$$
\|\Id + T\| \geq \|\Id_E + T_E\| = 1 +  \|T_E\| = 1 +  \|T\|,
$$
as claimed.
\end{proof}

Similar results like Theorem~\ref{lip-scd-dau-thm} and Corollary~\ref{cor:Deq-rnp} hold true for the alternative Daugavet property as well.

\begin{theorem} \label{lip-scd-alt-dau-thm}
Let $X$ be a Banach space with the alternative Daugavet property. Then, every $T\in \Lip(X)$ for which $\slope(T)$ is an SCD-set satisfies \eqref{aDE}.
\end{theorem}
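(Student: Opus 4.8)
The plan is to mimic the proof of Theorem~\ref{lip-scd-dau-thm} almost verbatim, replacing the geometric input for the Daugavet property (Corollary~\ref{cor:D(S)-norming} and Lemma~\ref{lemma:DPr}) by its alternative-Daugavet counterpart (Corollary~\ref{cor:ADP-Lip-slices} and Lemma~\ref{lemma:ADP}). The only substantive adjustment is to the intermediate result that powers the argument, namely the analogue of Lemma~\ref{D(S,delta)-lem}: with the same sets $K(X^*)$ and $D(S,\eps)$, one shows that if $X$ has the alternative Daugavet property then $D(S,\eps)$ is still weak$^*$-dense in $K(X^*)$. Indeed, to check that a weak$^*$-slice $S(B_{X^*},x,\delta)$ meets $D(S,\eps)$ for $x\in S_X$ and $\delta\in(0,\eps)$, one applies Corollary~\ref{cor:ADP-Lip-slices} to obtain $y\in S$ and $\theta\in\T$ with $\|x+\theta y\|>2-\delta$; then by Krein–Milman there is $y^*\in\extr(B_{X^*})$ with $\re y^*(x+\theta y)>2-\delta$, whence $\re y^*(x)>1-\delta$ and $\re(\bar\theta y^*)(y)>1-\delta$. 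Since $K(X^*)$ is invariant under multiplication by scalars of modulus one, $\bar\theta y^*\in K(X^*)$ is the element of $D(S,\eps)$ we need, while $y^*$ lies in $S(B_{X^*},x,\delta)$; a tiny bookkeeping point is that the slice and the set $D$ now ``see'' the functional through different unimodular twists, but that is exactly what the $\T$ in the alternative Daugavet equation buys us. Baire's theorem then yields, as in Corollary~\ref{cor:denseness-intersection-D(S)}, that $\bigcap_n D(\widetilde S_n,\eps)$ is a dense $G_\delta$ in $K(X^*)$ for any sequence of Lip-slices $\{\widetilde S_n\}$.

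With this in hand the proof proceeds as in Theorem~\ref{lip-scd-dau-thm}. Assume $\|T\|=1$, fix $\eps>0$, and choose $u\neq v$ in $X$ with $\|T(u)-T(v)\|/\|u-v\|>1-\eps$; after multiplying $T$ by a suitable scalar of modulus one (allowed since $\lambda T$ satisfies \eqref{aDE} iff $T$ does, by the convexity remark in the introduction, and rotations of $T$ do not change $\|T\|$) one may even arrange that a fixed functional is large on $(T(u)-T(v))/\|u-v\|$ — but in fact it is cleaner to keep $T$ fixed and instead pick the good functional from the dense $G_\delta$. Writing the determining slices of $\slope(T)$ as $S_n=\{z\in\slope(T)\dopu \re x_n^*(z)>1-\eps_n\}$ and forming the associated Lip-slices $\widetilde S_n=\{(x-y)/\|x-y\|\dopu (T(x)-T(y))/\|x-y\|\in S_n\}$ exactly as before, the dense-$G_\delta$ statement produces $x^*\in\bigcap_n D(\widetilde S_n,\eps)$ with $\re x^*\big((T(u)-T(v))/\|u-v\|\big)>1-\eps$. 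For each $n$ one extracts $x_n\neq y_n$ with $(x_n-y_n)/\|x_n-y_n\|\in S(S_X,\re x^*,\eps)$ and $(T(x_n)-T(y_n))/\|x_n-y_n\|\in S_n$; since $\{S_n\}$ is determining, $(T(u)-T(v))/\|u-v\|\in\cconv\{(T(x_n)-T(y_n))/\|x_n-y_n\|\}$, so a finite convex combination is still $\re x^*$-large, and by convexity some single index $\ell$ satisfies $\re x^*\big((T(x_\ell)-T(y_\ell))/\|x_\ell-y_\ell\|\big)>1-\eps$. Then
\[
\max_{\theta\in\T}\|\Id+\theta T\|\geq \left\|\frac{x_\ell-y_\ell}{\|x_\ell-y_\ell\|}+\frac{T(x_\ell)-T(y_\ell)}{\|x_\ell-y_\ell\|}\right\|\geq \re x^*\!\left(\frac{x_\ell-y_\ell}{\|x_\ell-y_\ell\|}+\frac{T(x_\ell)-T(y_\ell)}{\|x_\ell-y_\ell\|}\right)>2-2\eps,
\]
and letting $\eps\to0$ gives $\max_{\theta\in\T}\|\Id+\theta T\|\geq 2=1+\|T\|$; the reverse inequality is automatic.

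I expect no real obstacle beyond the careful statement and proof of the alternative-Daugavet version of Lemma~\ref{D(S,delta)-lem}: one must make sure the unimodular twist $\theta$ coming from the alternative Daugavet equation is absorbed on the side of $K(X^*)$ (using that $\extr(B_{X^*})$ and hence $K(X^*)$ are stable under $\T$-multiplication) rather than on the side of the slice $S(B_{X^*},x,\delta)$, so that the conclusion ``$S(B_{X^*},x,\delta)\cap D(S,\eps)\neq\emptyset$'' still has the right form to invoke Choquet's lemma. Once that lemma is in place, the rest is a line-by-line transcription of the Daugavet argument with $\|\cdot\|$ replaced by $\max_{\theta\in\T}\|\Id+\theta\,\cdot\,\|$ in the final estimate, so I would state the modified lemma and its corollary explicitly and then write ``the proof now follows that of Theorem~\ref{lip-scd-dau-thm} verbatim.''
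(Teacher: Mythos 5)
There is a genuine gap, and it sits exactly at the point you dismiss as ``a tiny bookkeeping point.'' To prove that $D(S,\eps)$ meets the weak$^*$-slice $S(B_{X^*},x,\delta)$ you must exhibit \emph{one} functional lying in \emph{both} sets. Your argument produces $y^*\in \extr(B_{X^*})$ with $\re y^*(x)>1-\delta$ (so $y^*\in S(B_{X^*},x,\delta)$) and a rotated functional $\theta y^*$ with $\re (\theta y^*)(y)>1-\delta$ (so $\theta y^*\in D(S,\eps)$); these are different elements of $K(X^*)$, and $\theta y^*$ has no reason to belong to $S(B_{X^*},x,\delta)$. The $\T$-invariance of $K(X^*)$ does not absorb this: the conclusion you need is about the intersection, not about membership of two separate points. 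Worse, the statement you are trying to prove is simply false: for a space with the alternative Daugavet property but not the Daugavet property, $D(S,\eps)$ (defined with $\re x^*(y)>1-\eps$) need not be weak$^*$-dense in $K(X^*)$. For instance, in $X=c_0$ take $S=S(S_X,e_1^*,\eps_0)$ with $\eps_0$ small; then every $y\in S$ has first coordinate close to $1$, so $-e_1^*\in K(X^*)$ has a relative weak$^*$-neighbourhood disjoint from $D(S,\eps)$ for small $\eps$. So the Baire-category step collapses.

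The paper's route avoids this by changing the set itself: one works with
$\widetilde{D}(S,\eps)=\{x^*\in K(X^*)\dopu \exists y\in S \text{ with } |x^*(y)|>1-\eps\}
=\{x^*\in K(X^*)\dopu S\cap\T S(S_X,\re x^*,\eps)\neq\emptyset\}$.
With this definition the very same $y^*$ produced by Krein--Milman satisfies both $\re y^*(x)>1-\delta$ and $|y^*(y)|\geq\re\bigl(\theta y^*(y)\bigr)>1-\delta$, so $y^*\in S(B_{X^*},x,\delta)\cap\widetilde{D}(S,\eps)$ and the density (Lemma~\ref{Dtilde(S,delta)-lem}) and the Baire argument (Corollary~\ref{cor:denseness-intersection-Dtilde(S)}) go through. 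The price is paid at the end of the main proof, not in the lemma: the functional $x^*\in\bigcap_n\widetilde{D}(\widetilde S_n,\eps)$ now only gives $\frac{x_\ell-y_\ell}{\|x_\ell-y_\ell\|}\in\T S(S_X,\re x^*,\eps)$, i.e.\ $\re x^*\bigl(\bar\theta_\ell\frac{x_\ell-y_\ell}{\|x_\ell-y_\ell\|}\bigr)>1-\eps$ for some $\theta_\ell\in\T$, and one concludes via
$\max_{\theta\in\T}\|\Id+\theta T\|\geq\|\bar\theta_\ell\Id+T\|\geq
\re x^*\bigl(\bar\theta_\ell\frac{x_\ell-y_\ell}{\|x_\ell-y_\ell\|}\bigr)+\re x^*\bigl(\frac{T(x_\ell)-T(y_\ell)}{\|x_\ell-y_\ell\|}\bigr)>2-2\eps$,
exactly as in the lush-space argument of Theorem~\ref{thm:lush}. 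So the overall architecture of your plan is the intended one, but the modified density lemma must be restated with the modulus (equivalently, with $\T S(S_X,\re x^*,\eps)$) before the rest can be transcribed.
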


\begin{corollary}\label{cor:ADeq-rnp}
Let $X$ be a Banach space with the alternative Daugavet property, $T\in \Lip(X)$ and suppose that $\cconv(\slope(T))$ has one of the following properties: Radon-Nikod\'{y}m property, Asplund property, CPCP or absence of $\ell_1$-sequences. Then $T$ satisfies \eqref{aDE}.
\end{corollary}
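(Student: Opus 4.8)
The plan is to transcribe the proof of Corollary~\ref{cor:Deq-rnp} almost verbatim, systematically replacing the \DP\ by the alternative Daugavet property, Corollary~\ref{cor:D(S)-norming} by Corollary~\ref{cor:ADP-Lip-slices}, Lemma~\ref{lemma:DPr} by Lemma~\ref{lemma:ADP}, and Theorem~\ref{lip-scd-dau-thm} by Theorem~\ref{lip-scd-alt-dau-thm} (the last being already at our disposal). The only essential new ingredient is the alternative-Daugavet counterpart of the separable reduction in Lemma~\ref{separable-det-lem}; once that is in place, the rest is mechanical.

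First I would prove the following analogue of Lemma~\ref{separable-det-lem}: \emph{if $X$ has the alternative Daugavet property and $T\in\Lip(X)$, then there is a separable subspace $E\subset X$ with the alternative Daugavet property such that $T(E)\subset E$ and $\|T|_E\|=\|T\|$.} The construction is exactly that of Lemma~\ref{separable-det-lem}: fix a sequence in $X$ whose consecutive Lipschitz quotients tend to $\|T\|$, let $X_1$ be their closed linear span, and build recursively $X_1\subset E_1\subset X_2\subset E_2\subset\cdots$ with $E_n\supset X_n$ separable having the alternative Daugavet property and $X_{n+1}=\overline{\lin}(E_n\cup T(E_n))$; then $E=\overline{\bigcup_n E_n}$ is the required subspace. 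Two facts are used: that the alternative Daugavet property passes to the closure of an increasing union of subspaces having it — immediate from the slice characterization in Lemma~\ref{lemma:ADP} — and that every separable subspace of a space with the alternative Daugavet property is contained in a separable subspace with that property. This last statement is the analogue of \cite[Theorem~4.5]{KadSW2} and should be obtained by the same argument, invoking Lemma~\ref{lemma:ADP} (equivalently \cite[Proposition~2.1]{MaOi}) wherever the proof there uses Lemma~\ref{lemma:DPr}. I expect this separable-determination step to be the only genuine (if routine) obstacle; everything else is copying.

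With $E$ as above, put $T_E:=T|_E\in\Lip(E)$. Then $\slope(T_E)\subset\slope(T)$, so $\cconv(\slope(T_E))\subset\cconv(\slope(T))$ and hence $\cconv(\slope(T_E))$ inherits whichever of the properties Radon-Nikod\'ym, Asplund, CPCP, or absence of $\ell_1$-sequences is assumed; being in addition separable, it is an SCD set by \cite{SCD}. Lemma~\ref{scd-conv-lem} then gives that $\slope(T_E)$ is an SCD set, so Theorem~\ref{lip-scd-alt-dau-thm} applies to $T_E$ and yields that $T_E$ satisfies \eqref{aDE} in $\Lip(E)$. Since $T(E)\subset E$, for each $\theta\in\T$ the restriction of $\Id+\theta T$ to $E$ is $\Id_E+\theta T_E$, so $\|\Id+\theta T\|\geq\|\Id_E+\theta T_E\|$, and therefore
\[
\max_{\theta\in\T}\|\Id+\theta T\|\ \geq\ \max_{\theta\in\T}\|\Id_E+\theta T_E\|\ =\ 1+\|T_E\|\ =\ 1+\|T\|.
\]
As $\max_{\theta\in\T}\|\Id+\theta T\|\leq 1+\|T\|$ always holds by the triangle inequality, $T$ satisfies \eqref{aDE}, which completes the argument.
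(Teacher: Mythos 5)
Your proposal is correct and follows essentially the same route as the paper: the authors likewise obtain this corollary by restating Lemma~\ref{separable-det-lem} for the alternative Daugavet property (their Lemma~\ref{separable-det-lem-ADP}, whose proof they omit as a routine adaptation relying on the separable determination of the alternative Daugavet property from \cite{SCD}) and then repeating the argument of Corollary~\ref{cor:Deq-rnp} with Theorem~\ref{lip-scd-alt-dau-thm} in place of Theorem~\ref{lip-scd-dau-thm}. Your identification of the separable-reduction step as the only non-mechanical ingredient, and your handling of the final norm estimate via $\max_{\theta\in\T}\|\Id+\theta T\|\geq \max_{\theta\in\T}\|\Id_E+\theta T_E\|$, match what the paper intends.
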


To get these two results, one only needs to modify the definition of $D(S,\eps)$ and to generalize Lemma~\ref{D(S,delta)-lem}, Corollary~\ref{cor:denseness-intersection-D(S)} and Lemma~\ref{separable-det-lem}. We state here such modified results  but we omit their proofs which are just adaptations of the corresponding ones for the Daugavet property, as it was done in \cite[\S 4 \& \S 5]{SCD} in the linear case. We need some notation. For a Lip-slice $S$ and $\eps>0$, we consider the set
\begin{align*}
\widetilde{D}(S,\eps)&:=\{x^*\in K(X^*) \dopu \exists y\in S \text{ with } |x^*(y)|>1-\eps \}\\
&\phantom{:}=\{x^*\in K(X^*) \dopu S \cap \T S(S_X, \re x^*,\eps)\neq\emptyset\}.
\end{align*}

\begin{lemma}\label{Dtilde(S,delta)-lem}
$\widetilde{D}(S, \eps)$ is an open subset of $K(X^*)$. If, moreover, $X$ has the alternative Daugavet property, then $\widetilde{D}(S, \eps)$ is dense in $K(X^*)$.
\end{lemma}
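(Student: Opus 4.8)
The plan is to mirror the proof of Lemma~\ref{D(S,delta)-lem}, replacing the role of the functional $\re x^*$ by $\max_{\theta\in\T}|x^*(\cdot)|$-type estimates, i.e.\ by working with $\T S(S_X,\re x^*,\eps)$ in place of $S(S_X,\re x^*,\eps)$. First I would observe that openness of $\widetilde{D}(S,\eps)$ is immediate: for each $y\in S$ the set $\widetilde{D}_y:=\{x^*\in K(X^*)\dopu |x^*(y)|>1-\eps\}$ is relatively weak$^*$-open in $K(X^*)$ (as $x^*\mapsto |x^*(y)|$ is weak$^*$-continuous), and $\widetilde{D}(S,\eps)=\bigcup_{y\in S}\widetilde{D}_y$ is a union of such sets. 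The Baire property of $K(X^*)$ is already recorded in Lemma~\ref{D(S,delta)-lem}, so nothing new is needed there.

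For density under the alternative Daugavet property, I would again use Choquet's lemma: it suffices to show that every weak$^*$-slice $S(B_{X^*},x,\delta)=\{x^*\in B_{X^*}\dopu \re x^*(x)>1-\delta\}$ with $x\in S_X$ and $\delta\in(0,\eps)$ meets $\widetilde{D}(S,\eps)$. Given such $x$ and $\delta$, apply Corollary~\ref{cor:ADP-Lip-slices} to produce $y\in S$ with $\max_{\theta\in\T}\|x+\theta y\|>2-\delta$; fix the optimal $\theta_0\in\T$. By the Krein--Milman theorem there is $y^*\in\ext[X^*]$ with $\re y^*(x+\theta_0 y)>2-\delta$, whence $\re y^*(x)>1-\delta$ and $\re y^*(\theta_0 y)>1-\delta$. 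The first inequality gives $y^*\in S(B_{X^*},x,\delta)$, and the second gives $|y^*(y)|\geq \re\bigl(\overline{\theta_0}\,y^*(y)\bigr)=\re y^*(\theta_0 y)>1-\delta>1-\eps$, so $y^*\in\widetilde{D}(S,\eps)$. Thus $y^*$ lies in the intersection, proving density. (The equivalence of the two descriptions of $\widetilde{D}(S,\eps)$ in the statement is the observation that $|x^*(y)|>1-\eps$ for some $y\in S$ iff $\overline{\theta}y\in S(S_X,\re x^*,\eps)$ for suitable $\theta\in\T$, i.e.\ $S\cap\T S(S_X,\re x^*,\eps)\neq\emptyset$; this is purely formal once one writes $x^*(y)=|x^*(y)|\theta$.)

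There is no real obstacle here; the only point requiring slight care is keeping track of the modulus-one scalar $\theta_0$ when passing from the estimate on $\|x+\theta y\|$ to separate estimates on $\re y^*(x)$ and $|y^*(y)|$, and making sure the inequality $1-\delta>1-\eps$ is used in the right direction (it is, since $\delta<\eps$). Everything else is a transcription of the Daugavet-property argument, exactly as anticipated in the sentence preceding the statement.
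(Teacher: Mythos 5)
Your proof is correct and is precisely the adaptation of Lemma~\ref{D(S,delta)-lem} that the paper intends (the paper omits this proof for exactly that reason), with Corollary~\ref{cor:ADP-Lip-slices} playing the role of Corollary~\ref{cor:D(S)-norming}. The only blemish is a stray conjugation: the chain should read $|y^*(y)|=|\theta_0 y^*(y)|\geq \re\bigl(\theta_0 y^*(y)\bigr)=\re y^*(\theta_0 y)>1-\delta$, but this does not affect the argument.
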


\begin{corollary}\label{cor:denseness-intersection-Dtilde(S)}
Let $X$ be a Banach space with the alternative Daugavet property. Given any sequence of Lip-slices $\{S_n\dopu n\in \N\}$ and any sequence $\{\delta_n\dopu n\in \N\}$ of positive numbers, we have that $\bigcap_{n\in \N} \widetilde{D}(S_n,\delta_n)$ is a dense $G_\delta$-subset of $K(X^*)$.
\end{corollary}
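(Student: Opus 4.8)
The plan is to mimic the proof of Corollary~\ref{cor:denseness-intersection-D(S)}, whose only ingredients are that $K(X^*)$ is a Baire space, that each set in the intersection is open and dense, and then the Baire category theorem. First I would observe that $K(X^*)$ is a Baire space; this was already established in the proof of Lemma~\ref{D(S,delta)-lem} and does not depend on any Daugavet-type hypothesis, so it may be quoted verbatim. Next, by Lemma~\ref{Dtilde(S,delta)-lem}, each $\widetilde{D}(S_n,\delta_n)$ is a weak$^*$-open subset of $K(X^*)$, and since $X$ has the alternative Daugavet property, each is moreover weak$^*$-dense in $K(X^*)$.

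Having set this up, the conclusion is immediate: a countable intersection of dense open sets in a Baire space is a dense $G_\delta$-set. Explicitly, $\bigcap_{n\in\N}\widetilde{D}(S_n,\delta_n)$ is visibly a $G_\delta$-set as a countable intersection of open sets, and it is dense by the Baire category theorem applied in the Baire space $K(X^*)$. This is exactly the same two-line argument as for Corollary~\ref{cor:denseness-intersection-D(S)}, with $D$ replaced by $\widetilde{D}$ throughout.

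There is essentially no obstacle here, since the paper explicitly states that Lemma~\ref{Dtilde(S,delta)-lem} is the ``alternative Daugavet'' analogue of Lemma~\ref{D(S,delta)-lem} and that its proof is an adaptation of the linear case in \cite[\S 4 \& \S 5]{SCD}; all the work has been pushed into Lemma~\ref{Dtilde(S,delta)-lem}. The only thing one must be mildly careful about is that the Baire-space property of $K(X^*)$ is genuinely hypothesis-free (it is a consequence of the first-category decomposition of $K'(X^*)\setminus K(X^*)$ given in the proof of Lemma~\ref{D(S,delta)-lem}), so it is legitimate to use it again in the present setting. With that in hand, the proof of Corollary~\ref{cor:denseness-intersection-Dtilde(S)} is nothing more than: ``Apply the Baire category theorem in $K(X^*)$ to the sequence of dense open sets $\widetilde{D}(S_n,\delta_n)$ furnished by Lemma~\ref{Dtilde(S,delta)-lem}.''
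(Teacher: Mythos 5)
Your proposal is correct and matches the paper's (implicitly indicated) argument exactly: the paper derives Corollary~\ref{cor:denseness-intersection-D(S)} as ``an application of the Baire Theorem'' to the open dense sets from Lemma~\ref{D(S,delta)-lem}, and explicitly says the $\widetilde{D}$ version is the same adaptation, with the openness and density supplied by Lemma~\ref{Dtilde(S,delta)-lem} and the Baire-space property of $K(X^*)$ being hypothesis-free. Nothing is missing.
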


\begin{lemma} \label{separable-det-lem-ADP}
Let $X$ be a Banach space with the alternative Daugavet property and let $T\in \Lip(X)$. Then there is a separable subspace $E \subset X$ having the alternative Daugavet property, with $T(E) \subset E$ and $\|T|_E\| = \|T\|$.
\end{lemma}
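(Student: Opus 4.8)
The plan is to imitate, essentially verbatim, the proof of Lemma~\ref{separable-det-lem}, systematically replacing the Daugavet property by the alternative Daugavet property and Lemma~\ref{lemma:DPr} by Lemma~\ref{lemma:ADP}. First I would fix a sequence $(x_k)\subset X$ with
\[
\lim_{k\to\infty}\frac{\|T(x_{2k})-T(x_{2k-1})\|}{\|x_{2k}-x_{2k-1}\|}=\|T\|,
\]
set $X_1=\overline{\lin}\{x_k\}_{k\in\N}$, and build recursively an increasing chain of separable subspaces $X_1\subset E_1\subset X_2\subset E_2\subset\cdots$ in which each $E_n\supset X_n$ is a separable subspace with the alternative Daugavet property and $X_{n+1}=\overline{\lin}\bigl(E_n\cup T(E_n)\bigr)$. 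Then $E=\overline{\bigcup_{n\in\N}E_n}$ is the subspace we want: it is separable, $T(E)\subset E$ because $T(E_n)\subset X_{n+1}\subset E$ for every $n$ and $T$ is continuous while $E$ is closed, and $\|T|_E\|=\|T\|$ because all the $x_k$ lie in $X_1\subset E$, so the ratios of the defining sequence are realized by pairs from $E$. Two facts are used along the way.

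The first fact is that every separable subspace of $X$ is contained in a separable subspace with the alternative Daugavet property; this is the exact analogue of \cite[Theorem~4.5]{KadSW2} and is obtained by the same construction, which transfers a slice-type property from $X$ to a suitably chosen separable subspace. The only modification is that the property being transferred is the one in Lemma~\ref{lemma:ADP} instead of the one in Lemma~\ref{lemma:DPr}; since both criteria have the same shape --- for every $x\in S_X$, every $\eps>0$ and every slice $S$ of $S_X$ there is $y\in S$ with $\max_{\theta\in\T}\|x+\theta y\|>2-\eps$ (resp.\ $\|x+y\|>2-\eps$) --- and since $(x,y)\mapsto\max_{\theta\in\T}\|x+\theta y\|$ is continuous, the density bookkeeping of that proof goes through unchanged; alternatively one may simply quote the separable determination of the alternative Daugavet property already in the literature (cf.\ \cite{MaOi}, \cite{SCD}). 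The second fact is that the alternative Daugavet property passes from the $E_n$ to $E$: given $x\in S_E$, $\eps>0$ and a slice $S$ of $S_E$, one approximates $x$ by a point of $S_{E_n}$ for $n$ large, enlarges $n$ so that $S$ contains a non-empty slice $S'$ of $S_{E_n}$ (possible because the functional defining $S$ may be taken of norm $1$ and $\bigcup_n E_n$ is dense in $E$), applies Lemma~\ref{lemma:ADP} inside $E_n$ to obtain $y\in S'\subset S$ with $\max_{\theta\in\T}\|x+\theta y\|>2-\eps$, and then invokes the sufficiency direction of Lemma~\ref{lemma:ADP} for $E$.

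I expect the main (and in fact only) obstacle to be a careful reading of the proof of \cite[Theorem~4.5]{KadSW2}: one must check that the Daugavet property enters there exclusively through the logical form of Lemma~\ref{lemma:DPr}, so that substituting Lemma~\ref{lemma:ADP} produces a separable subspace with the alternative Daugavet property. Once this is settled, everything else is identical to the proof of Lemma~\ref{separable-det-lem} and no genuinely new argument is required --- which is precisely the sense in which the paper asserts that these proofs are ``just adaptations of the corresponding ones for the Daugavet property''.
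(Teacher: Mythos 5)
Your proposal is correct and is essentially the proof the paper intends: the paper explicitly omits the argument, describing it as a verbatim adaptation of Lemma~\ref{separable-det-lem} with the Daugavet property replaced by the alternative Daugavet property, Lemma~\ref{lemma:DPr} by Lemma~\ref{lemma:ADP}, and the separable-determination result \cite[Theorem~4.5]{KadSW2} by its ADP analogue (available via the same construction, cf.\ \cite{SCD}). Your identification of the two facts that need checking, and your verification that the limiting subspace $E$ inherits the property, match exactly what the authors have in mind.
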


Let us remark that as a consequence of Theorem~\ref{lip-scd-alt-dau-thm} and Corollary~\ref{cor:ADeq-rnp}, we get the following results, which extend \cite[Theorem~3.4]{Wang-Huang-Tan} in the case of $n(X)=1$. They are also consequences of the results of the next section and, in the real case, of \cite[Theorem~2.6]{Wang-Huang-Tan} and \cite[Theorem~4.4]{SCD}. Recall that a Banach space $X$ is \emph{SCD} \cite{SCD} if every bounded convex subset of $X$ is SCD (by Lemma~\ref{scd-conv-lem} this is equivalent to the fact that every bounded subset of $X$ is SCD). Examples of SCD spaces are separable Asplund spaces, separable spaces with the Radon-Nikod\'{y}m property, those separable spaces not containing $\ell_1$, and separable spaces with the convex point of continuity property.

\begin{corollary}\label{cor:SCD-n=>n_L}
Let $X$ be an SCD Banach space. If $n(X)=1$, then $n_L(X)=1$.
\end{corollary}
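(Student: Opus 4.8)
The plan is to deduce the statement directly from Theorem~\ref{lip-scd-alt-dau-thm}, using the equivalence between the alternative Daugavet equation for Lipschitz operators and the Lipschitz numerical radius recalled in the introduction. Since $n_L(X)\leq n(X)$ always holds, it suffices to prove the reverse inequality $n_L(X)\geq 1$ under the two hypotheses $n(X)=1$ and $X$ SCD.

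First I would unwind the hypotheses. The assumption $n(X)=1$ says that every bounded linear operator on $X$, in particular every rank-one operator, satisfies \eqref{aDE}; hence $X$ has the alternative Daugavet property. The assumption that $X$ is SCD says that every bounded convex subset of $X$ is an SCD set, and by Lemma~\ref{scd-conv-lem} (applied to closed convex hulls) the same then holds for every bounded subset of $X$. (In particular $X$ is separable, since $B_X$ is an SCD set and SCD sets are separable, but this fact is not explicitly needed below.)

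Next, fix an arbitrary $T\in\Lip(X)$ with $\|T\|=1$. Then $\slope(T)\subset B_X$ is a bounded subset of $X$, hence an SCD set by the previous step. Since $X$ has the alternative Daugavet property, Theorem~\ref{lip-scd-alt-dau-thm} gives that $T$ satisfies \eqref{aDE}, that is, $\max_{\theta\in\T}\|\Id+\theta T\|=1+\|T\|$. By the Huang--Tan--Wang equivalence $\max_{\theta\in\T}\|\Id+\theta T\|=1+\|T\|\Longleftrightarrow v(T)=\|T\|$, this means $v(T)=\|T\|=1$. As $T$ was an arbitrary norm-one Lipschitz operator, we obtain $n_L(X)=\inf\{v(T)\dopu T\in\Lip(X),\ \|T\|=1\}\geq 1$, and together with $n_L(X)\leq n(X)=1$ this yields $n_L(X)=1$.

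The whole argument is short and I do not anticipate any genuine obstacle; the one point that needs a little care is that $\slope(T)$ need not be convex, so one should invoke the non-convex form of the SCD property — i.e.\ Lemma~\ref{scd-conv-lem}, or equivalently the fact that $\cconv(\slope(T))$ being SCD forces $\slope(T)$ to be SCD — rather than the literal definition of an SCD space, exactly as flagged in the remark preceding the statement.
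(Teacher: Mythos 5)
Your proof is correct and follows exactly the route the paper intends: deduce the alternative Daugavet property from $n(X)=1$, use Lemma~\ref{scd-conv-lem} to see that the non-convex set $\slope(T)$ is SCD, apply Theorem~\ref{lip-scd-alt-dau-thm}, and translate \eqref{aDE} into $v(T)=\|T\|$ via the Huang--Tan--Wang equivalence. The point you flag about needing the non-convex form of the SCD hypothesis is precisely the remark the paper makes just before stating the corollary, so there is nothing to add.
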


As a consequence of Corollary~\ref{cor:ADeq-rnp}, we have the following result.

\begin{corollary}\label{cor:RNP-Asplund...-n=>n_L}
Let $X$ be a Banach space with $n(X)=1$. If $X$ has the Radon-Nikod\'{y}m property, or $X$ is an Asplund space, or $X$ does not contain $\ell_1$, or $X$ has the convex point of continuity  property, then $n_L(X)=1$.
\end{corollary}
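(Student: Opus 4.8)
The plan is to reduce the statement to Corollary~\ref{cor:ADeq-rnp} by observing that numerical index~$1$ is equivalent to the alternative Daugavet property holding for \emph{all} (linear) operators, and in particular for all rank-one operators, so a space $X$ with $n(X)=1$ has the alternative Daugavet property. Concretely, $n(X)=1$ means that every $T\in L(X)$ satisfies $v(T)=\|T\|$, which by the equivalence $v(T)=\|T\|\iff T$ satisfies \eqref{aDE} recalled in the introduction means every $T\in L(X)$ satisfies \eqref{aDE}; a fortiori every rank-one operator does, so $X$ has the alternative Daugavet property. Thus $X$ is a space with the alternative Daugavet property to which Corollary~\ref{cor:ADeq-rnp} may be applied.

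Next I would take an arbitrary $T\in\Lip(X)$ with $\|T\|=1$ and check that $\cconv(\slope(T))$ inherits whichever of the four listed properties $X$ has. Indeed, $\slope(T)\subset 2B_X$ is bounded (its elements are difference quotients of a $1$-Lipschitz map), so $\cconv(\slope(T))$ is a bounded closed convex subset of $X$. Each of the properties in question—the Radon--Nikod\'ym property, being Asplund, not containing $\ell_1$, and the CPCP—passes from a Banach space to its bounded closed convex subsets (this is standard and is exactly the hypothesis format of Corollary~\ref{cor:ADeq-rnp}). Hence Corollary~\ref{cor:ADeq-rnp} applies and gives that $T$ satisfies \eqref{aDE}, i.e.\ $\max_{\theta\in\T}\|\Id+\theta T\|=1+\|T\|$, which by the Huang--Tan--Wang characterization recalled in the introduction is equivalent to $v(T)=\|T\|=1$.

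Finally I would conclude: since $v(T)=1$ for every $T\in\Lip(X)$ with $\|T\|=1$, the infimum defining $n_L(X)$ equals $1$; combined with the trivial bound $n_L(X)\le 1$ this yields $n_L(X)=1$. I do not anticipate a serious obstacle here—the corollary is essentially a packaging of Corollary~\ref{cor:ADeq-rnp} together with two already-recalled equivalences. The only point requiring a word of care is the stability of the four isomorphic properties under passing to closed convex bounded subsets, but this is classical and is implicitly already used in the proof of Corollary~\ref{cor:ADeq-rnp} (via the reduction to a separable subspace and the SCD machinery of \cite{SCD}); no new argument is needed.
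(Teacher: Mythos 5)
Your proof is correct and follows exactly the route the paper intends: the paper presents this corollary as an immediate consequence of Corollary~\ref{cor:ADeq-rnp}, using precisely the observations you make ($n(X)=1$ gives the alternative Daugavet property via the $v(T)=\|T\|\Leftrightarrow$ \eqref{aDE} equivalence, the four properties pass to the bounded closed convex set $\cconv(\slope(T))$, and \eqref{aDE} for every Lipschitz $T$ translates back into $n_L(X)=1$). No gaps; this matches the paper's argument.
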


\section{Complex lush spaces and Lipschitz numerical index}\label{sec4}

Since it is not easy to deal with Banach spaces with numerical
index~$1$, several sufficient geometrical
conditions have been considered in the literature (see \cite{KaMaPa}), the weakest one being the
so-called lushness. A Banach space $X$ is said to be \emph{lush}
\cite{BKMW} if for every $x,y\in S_X$ and every $\eps>0$, there is a slice $S=S(S_X, \re y^*,\eps)$ with $y^* \in S_{X^*}$ such that $y \in S$ and $\dist \left(x,\conv (\T S)\right) < \eps$ (observe that the original definition of lushness used slices of the unit ball, but this reformulation is equivalent.) Lush spaces have
numerical index~$1$ \cite[Proposition~2.2]{BKMW}, but the converse result is not true
\cite{KMMS}, even though most of the known examples of Banach spaces with numerical index~$1$ are actually lush \cite{BKMM-lush}. We refer to the cited papers \cite{BKMM-lush, BKMW, KMMS} for background.

It is proved in \cite[Theorem~2.6]{Wang-Huang-Tan} that real lush spaces have Lipschitz numerical index~$1$. Our aim is to show that the same happens for complex spaces.

\begin{theorem}\label{thm:lush}
Let $X$ be a (complex) lush space. Then, $n_L(X)=1$.
\end{theorem}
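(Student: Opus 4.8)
The plan is to deduce the statement from a single ``saturation'' property of pairs of points. Since $n_L(X)=\inf\{v(T)\dopu T\in\Lip(X),\ \|T\|=1\}$ and $v(T)\leq\|T\|$ always, it suffices to show $v(T)=1$ for every $T\in\Lip(X)$ with $\|T\|=1$; fix such a $T$ and $\eps\in(0,1)$. By the equivalence established in \cite{Wang-Huang-Tan} and recalled in the introduction, $v(T)=\|T\|$ is the same as $\max_{\theta\in\T}\|\Id+\theta T\|=2$. Since for every $\phi\in S_{X^*}$ and $a,b\in X$ one has $\max_{\theta\in\T}\|a+\theta b\|\geq\max_{\theta\in\T}|\phi(a)+\theta\phi(b)|=|\phi(a)|+|\phi(b)|$, it is enough to produce points $x\neq y$ in $X$ and a functional $\phi\in S_{X^*}$ with $|\phi(x-y)|>(1-\eps)\|x-y\|$ and $|\phi(T(x)-T(y))|>(1-\eps)\|x-y\|$, i.e.\ one functional almost norming both increments along the \emph{same} pair.

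To build such a configuration, I start from $u_0\neq v_0$ with $\|T(u_0)-T(v_0)\|>(1-\eps)\|u_0-v_0\|$, put $r:=\|u_0-v_0\|$, $p_0:=\frac{u_0-v_0}{r}\in S_X$, $w_0:=T(u_0)-T(v_0)$ (so $\|w_0\|>(1-\eps)r$), and apply the definition of lushness with $x=p_0$ and $y=w_0/\|w_0\|$. This produces $\phi\in S_{X^*}$ with $w_0/\|w_0\|\in S(S_X,\re\phi,\eps)$ — hence $|\phi(w_0)|\geq\re\phi(w_0)>(1-\eps)^2 r$, i.e.\ $\phi$ almost norms the $T$-increment — together with a representation
\[
p_0=\sum_{k=1}^{N}\lambda_k\theta_k t_k+e,\qquad \lambda_k>0,\ \textstyle\sum_k\lambda_k=1,\ \theta_k\in\T,\ t_k\in S(S_X,\re\phi,\eps),\ \|e\|<\eps
\]
(indices with $\lambda_k=0$ being discarded). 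The key move is to use this decomposition of the \emph{direction} $p_0$ to walk along the telescoping path $z_0:=v_0$, $z_k:=v_0+r\sum_{i\leq k}\lambda_i\theta_i t_i$: then $z_k-z_{k-1}=r\lambda_k\theta_k t_k$ has norm $r\lambda_k$ and unit direction $\theta_k t_k$ with $|\phi(\theta_k t_k)|=|\phi(t_k)|\geq\re\phi(t_k)>1-\eps$, while $z_N=u_0-re$ lies within $r\eps$ of $u_0$.

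Now the extraction. Telescoping, $\sum_{k=1}^{N}\bigl(T(z_k)-T(z_{k-1})\bigr)=T(z_N)-T(v_0)$, which differs from $w_0$ by at most $\|T\|\,\|z_N-u_0\|<r\eps$; hence $\sum_k|\phi(T(z_k)-T(z_{k-1}))|\geq|\phi(T(z_N)-T(v_0))|>(1-\eps)^2 r-r\eps$. On the other hand $|\phi(T(z_k)-T(z_{k-1}))|\leq\|T(z_k)-T(z_{k-1})\|\leq\|z_k-z_{k-1}\|=r\lambda_k$ and $\sum_k r\lambda_k=r$, so $\sum_k\bigl(r\lambda_k-|\phi(T(z_k)-T(z_{k-1}))|\bigr)$ is a sum of nonnegative terms smaller than $r\eta$, where $\eta:=3\eps-\eps^2$. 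Thus it cannot happen that $r\lambda_k-|\phi(T(z_k)-T(z_{k-1}))|\geq\eta\,r\lambda_k$ for every $k$, so some index $\ell$ satisfies $|\phi(T(z_\ell)-T(z_{\ell-1}))|>(1-\eta)\,r\lambda_\ell=(1-\eta)\|z_\ell-z_{\ell-1}\|$. The pair $(z_\ell,z_{\ell-1})$ together with $\phi$ then gives the two inequalities of the first paragraph with $\eps$ replaced by $\max\{\eps,\eta\}$, whence $\max_{\theta\in\T}\|\Id+\theta T\|>2-\eps-\eta$; letting $\eps\to0$ yields $\max_{\theta\in\T}\|\Id+\theta T\|=2$, hence $v(T)=1=\|T\|$, and therefore $n_L(X)=1$.

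The delicate point — and, I suspect, the reason the complex case does not come for free from \cite{Wang-Huang-Tan} — is \emph{what} one feeds to lushness. Applying it to the increment $w_0$, or telescoping along the segment $[v_0,u_0]$, founders on cancellation: $\phi$ can be small on a $\T$-convex combination $\sum_k\lambda_k\theta_k t_k$ even when all the $t_k$ lie in a single slice of $\phi$. Localizing the \emph{direction} $u_0-v_0$ inside the absolutely convex hull of a slice on which a functional almost norming $T(u_0)-T(v_0)$ is large — so that the telescoping path literally runs through the vectors $\theta_k t_k$ — and then invoking the equality case of the triangle inequality as above, is what makes the argument go through; notably, nothing in it distinguishes real from complex scalars. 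The only routine care required is the bookkeeping of the scalar $r=\|u_0-v_0\|$ and the discarding of vanishing coefficients $\lambda_k$.
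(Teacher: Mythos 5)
Your proof is correct, and although it rests on the same underlying idea as the paper's argument --- feed lushness the pair consisting of the direction $(u_0-v_0)/\|u_0-v_0\|$ and the normalized increment $(T(u_0)-T(v_0))/\|T(u_0)-T(v_0)\|$, and then extract by convexity a single sub-increment on which one linear functional is large both before and after applying $T$ --- the implementation is genuinely different. The paper first upgrades the definition of lushness to the exact statement $x_0\in\cconv(\T S)$ (Lemma~\ref{lemma:lush}, which needs the Baire-category input from \cite{KMMP2009}), precisely because the pigeonhole step is delegated to the Fundamental Lemma~\ref{lem:property-Lip-slices}, whose hypothesis requires membership in the closed convex hull rather than $\eps$-proximity; that lemma is then applied to the set $\T S$ and the Lipschitz functional $\re y^*\circ T$. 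You instead keep the original approximate form $\dist(p_0,\conv(\T S))<\eps$ and run the telescoping computation by hand along the path $z_k=v_0+r\sum_{i\le k}\lambda_i\theta_i t_i$, absorbing the error vector $e$ into the estimates; the unimodular coefficients $\theta_k$, which the paper handles by slicing $\T S$, are handled in your version by working with $|\phi(\cdot)|$ together with the identity $\max_{\theta\in\T}|\phi(a)+\theta\phi(b)|=|\phi(a)|+|\phi(b)|$, and your deficiency bound $\sum_k\bigl(r\lambda_k-|\phi(T(z_k)-T(z_{k-1}))|\bigr)<r(3\eps-\eps^2)$ and the resulting pigeonhole are correct (the discarding of vanishing $\lambda_k$ guarantees $z_\ell\neq z_{\ell-1}$, as needed). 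What your route buys is self-containedness: it uses neither Lemma~\ref{lemma:lush} nor Lemma~\ref{lem:property-Lip-slices}, only the definition of lushness and the characterization $v(T)=\|T\|\Longleftrightarrow\max_{\theta\in\T}\|\Id+\theta T\|=1+\|T\|$ from \cite{Wang-Huang-Tan}. What the paper's route buys is reusability, since the Fundamental Lemma is also the engine of Sections~\ref{sec:2} and~\ref{sec:3}. I have no corrections.
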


We need a reformulation of lushness which follows from the results of \cite{KMMP2009}.

\begin{lemma}\label{lemma:lush}
Let $X$ be a lush space. Then, for every $x,y\in S_X$ and every $\eps>0$, there is a slice $S=S(S_X, \re y^*,\eps)$ with $y^* \in S_{X^*}$ such that $y \in S$ and $x\in \cconv (\T S)$.
\end{lemma}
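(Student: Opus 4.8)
The plan is to derive Lemma~\ref{lemma:lush} from the definition of lushness by an $\eps/2^n$-iteration (successive approximation) argument that upgrades the "$\dist(x,\conv(\T S))<\eps$" conclusion to actual membership in the \emph{closed} absolutely convex hull of a single slice. The subtlety is that in the definition of lushness the slice $S$ producing the approximation of $x$ depends on the accuracy $\eps$, so one cannot simply take a single slice and pass to the closure; one must produce \emph{one} functional $y^*$ whose slice simultaneously contains $y$ and has $x$ in the closed absolutely convex hull. The reference to \cite{KMMP2009} suggests this is precisely the reformulation established there (in the guise of the "slices decomposition" or the equivalence of lushness with the property that $S_{X^*}$ is the closed convex hull, in an appropriate sense, of its "lush" functionals), so I would invoke those results.

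Concretely, I would first recall from \cite{KMMP2009} the characterization that $X$ is lush if and only if for every $y\in S_X$ and $\eps>0$ the set
\[
\bigl\{ y^*\in S_{X^*} \dopu y\in S(S_X,\re y^*,\eps),\ B_X=\cconv\bigl(\T\, S(S_X,\re y^*,\eps)\bigr) \text{ in a suitable localized sense}\bigr\}
\]
is nonempty, or more usefully the version phrased via an amalgamation of slices. Then, given $x,y\in S_X$ and $\eps>0$, I would start with the slice $S_1=S(S_X,\re y_1^*,\eps)$ given by lushness with $y\in S_1$ and an element $z_1\in\conv(\T S_1)$ with $\|x-z_1\|<\eps/2$. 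Writing $x = z_1 + r_1 u_1$ with $r_1<\eps/2$ and $u_1\in S_X$, apply lushness again to $u_1$ (and $y$) with accuracy $\eps/4$, and iterate: at stage $n$ one peels off a piece lying in $\conv(\T S_n)$ with small coefficient and a remainder of norm $<\eps/2^n$. The key point is to arrange that all the slices $S_n$ can be taken to be the \emph{same} slice $S=S(S_X,\re y^*,\eps)$; this is exactly what the $\Lip$-free reformulation in \cite{KMMP2009} buys us, since there lushness is equivalent to a statement about a fixed "rich" family of functionals rather than $\eps$-dependent ones. With a common slice $S$, the telescoping sum $x=\sum_n \mu_n w_n$ with $w_n\in\conv(\T S)$, $\mu_n\geq 0$, $\sum\mu_n=1$ converges and exhibits $x\in\cconv(\T S)$, while $y\in S$ is retained throughout.

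The main obstacle is precisely the amalgamation step: in the bare definition of lushness the slice depends on the target accuracy, so a naive iteration produces countably many different slices $S_n$ and only gives $x\in \cconv\bigl(\T\bigcup_n S_n\bigr)$, which is useless. Overcoming this is the content of the cited reformulation from \cite{KMMP2009}; I would quote the relevant statement there (that lushness is equivalent to the existence, for each $y\in S_X$ and $\eps>0$, of a single $y^*$ with $y\in S(S_X,\re y^*,\eps)$ and $\cconv(\T S(S_X,\re y^*,\eps)) \supseteq B_X$, or a localized/weak$^*$ variant thereof), and then the iteration above becomes routine. If instead one prefers a self-contained route, the alternative is to run the iteration keeping track of both the slice \emph{and} a uniformly separated reference point, using that for $y\in S_X$ the collection of admissible slices through $y$ of width $\eps$ is closed under finite "intersection up to $\eps$" operations in lush spaces; but this essentially reproves the \cite{KMMP2009} reformulation, so citing it is cleaner.

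In summary: invoke the \cite{KMMP2009} reformulation to fix a single slice $S=S(S_X,\re y^*,\eps)$ with $y\in S$ and $B_X\subseteq\cconv(\T S)$ (localized appropriately near $x$); then either conclude immediately that $x\in B_X\subseteq\cconv(\T S)$, or, if only the $\eps$-approximate version is available uniformly, run the $\eps/2^n$ telescoping argument within that fixed slice to upgrade the approximation to membership. The only real work is setting up the iteration so that the coefficients sum to $1$ and the remainders vanish, which is the standard successive-approximation bookkeeping and needs no delicate estimate beyond $\sum_n \eps/2^n<\infty$.
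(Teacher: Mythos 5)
Your plan correctly isolates the crux — the slice furnished by the definition of lushness depends on the approximation accuracy, so one cannot simply pass to the closure — but it does not actually resolve it. The reformulation you propose to quote from \cite{KMMP2009} (``for each $y$ and $\eps$ there is a single $y^*$ with $y\in S(S_X,\re y^*,\eps)$ and $B_X\subseteq\cconv(\T S(S_X,\re y^*,\eps))$'') is not the statement that paper contains; a result of that shape is known for \emph{separable} lush spaces (via a norming $G_\delta$ set of ``lush'' functionals), and using it here would require a separable reduction that you do not supply, whereas the lemma is asserted for arbitrary lush spaces. Moreover, if such a reformulation were available, the $\eps/2^n$ iteration would be superfluous ($x\in B_X\subseteq\cconv(\T S)$ ends the proof); and without it the iteration cannot be repaired: starting from a fixed-accuracy estimate $\dist(x,\conv(\T S))<\eps$ for a \emph{fixed} slice $S$, the telescoping $x=z_1+r_1z_2+r_1r_2z_3+\cdots$ has coefficients summing to roughly $1/(1-\eps)>1$, so it only yields $(1-\eps)x\in\cconv(\T S)$, not $x\in\cconv(\T S)$; your claim that the coefficients can be arranged to sum to $1$ is where this breaks. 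So the entire weight of the proof rests on a citation whose content is misstated, and the fallback argument fails.

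What \cite{KMMP2009} actually provides (Lemma~4.2 there) is a \emph{density} statement in the dual: for each $y\in S_X$ and $\eps>0$, the set $K_y$ of functionals $y^*\in K(X^*)$ with $y\in\cconv\bigl(\T S(S_X,\re y^*,\eps)\bigr)$ is dense in $K(X^*)$. The paper's proof then observes that $\ext$ lies in the weak$^*$ closure of $K_y$, so by Krein--Milman/Choquet one can choose $y^*\in K_y$ that additionally satisfies $\re y^*(x)>1-\eps$; this single $y^*$ controls both points at once (with the roles of $x$ and $y$ interchanged relative to the statement, which is harmless since both are arbitrary). This ``pick the functional from a dense set of good ones, and make it nearly norm the second point'' mechanism is exactly the two-point amalgamation your plan is missing, and no successive approximation is needed. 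To fix your proposal, replace the claimed reformulation by this density lemma and add the Choquet/Krein--Milman selection step.
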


\begin{proof}
By Lemma~4.2 of \cite{KMMP2009}, given $y\in S_X$ and $\eps>0$, there is a dense subset $K_y$ of $K(X^*)$ (with the notation of section~\ref{sec:3}) such that $y\in \cconv\bigl(\T\,S(S_X,\re y^*,\eps)\bigr)$ for every $y^*\in K_y$ (observe that in \cite[Lemma~4.2]{KMMP2009}, $K_y$ is a subset of what is called $K'(X^*)$ in the proof of Lemma~\ref{D(S,delta)-lem}, but since $K(X^*)$ is also a Baire space, the same argument gives the result that we are using). Now, as the set of extreme points of $B_{X^*}$ is contained in the closure of $K_y$, there is $y^*\in K_y$ such that $\re y^*(x)>1-\eps$, that is, $x\in S(S_X,\re y^*,\eps)$.
\end{proof}

\begin{proof}[Proof of \ref{thm:lush}]
Let $T\in \Lip(X)$ with $\|T\|=1$. By \cite[Corollary~2.3]{Wang-Huang-Tan} it suffices to show that $\underset{\theta\in\T}{\max }\|\Id+\theta T\|=2$ (actually, it is equivalent).
For a fixed $\eps>0$, there are $x_1, x_2\in X$ such that
$$
\frac{\|T(x_1)-T(x_2)\|}{\|x_1-x_2\|}>1-\eps.
$$
We denote
$$
y=\frac{T(x_1)-T(x_2)}{\|x_1-x_2\|},\quad y_0=\frac{y}{\|y\|}, \quad \text{and} \quad x_0=\frac{x_1-x_2}{\|x_1-x_2\|}.
$$
Using Lemma~\ref{lemma:lush}, we may find a slice $S=S(S_X, \re y^*,\eps)$ such that $y_0\in S$ and $x_0\in \cconv(\T S)$.
Next, we write
$$
\widetilde{S}=\left\{\frac{y_1 - y_2}{\|y_1-y_2\|} \dopu y_1\neq y_2, \frac{\re y^*\circ T(y_1) - \re y^*\circ T(y_2)}{\|y_1 - y_2\|}>1 - 2\eps  \right\}
$$
and observe that $x_0\in \widetilde{S}$ since we have that
$$
\frac{\re y^*(Tx_1)-\re y^*(Tx_2)}{\|x_1-x_2\|}=\re y^*(y)> \re y^*(y_0)-\eps> 1-2\eps.
$$
In particular, $\widetilde{S}$ is not empty and so it is a Lip-slice.Further, $x_0\in \cconv(\T S)\cap \widetilde{S}$, so Lemma~\ref{lem:property-Lip-slices} gives us that
$\T S$ intersects $\widetilde{S}$ as well. Hence, there are $\theta_0\in \T$ and $z\in S$ satisfying $\theta_0 z \in \widetilde{S}$. That is, there exist $z_1 \neq z_2$ in $X$ such that
$$
\theta_0 z =\frac{z_1-z_2}{\|z_1-z_2\|} \qquad \text{and} \qquad \frac{\re y^*T(z_1)-\re y^*T(z_2)}{\|z_1-z_2\|}>1-2\eps.
$$
Finally, we have
\begin{align*}
\max_{\theta\in\T }\|\Id+\theta T\|
& \geq
\|\theta_0^{-1}\Id+T\|  \\
&\geq
\Bigl\| z + \frac{Tz_1 - Tz_2}{\|z_1-z_2\| }\Bigr\| \\
&\geq
\re y^* \Bigl( z + \frac{Tz_1 - Tz_2}{\|z_1-z_2\| }\Bigr) \\
&=
\re y^* (z) + \re y^* \Bigl(\frac{Tz_1-Tz_2}{\|z_1-z_2\|} \Bigr)  \\
&>
(1-\eps) + (1-2\eps)
=
2-3\eps,
\end{align*}
which finishes the proof by  letting $\eps\rightarrow 0$.
\end{proof}

As we have already commented in the previous section, Corollaries \ref{cor:SCD-n=>n_L} and \ref{cor:RNP-Asplund...-n=>n_L} also follow from this result.

It is asked in \cite{Wang-Huang-Tan} whether $n(X)=n_L(X)$ for every Banach space $X$. As far as we know, even the following particular case of the above question also remains open.

\begin{question}
Let $X$ be a Banach space with $n(X)=1$. Is it true that $n_L(X)=1$?
\end{question}


\newpage

\bibliographystyle{amsplain}

\begin{thebibliography}{99}

\bibitem{SCD}
{\sc A. Avil\'es, V. Kadets, M. Mart\'{\i}n, J. Mer\'{\i}, and V. Shepelska},
Slicely countably determined Banach spaces.
\emph{Trans. Amer. Math. Soc.} 362 (2010), 4871--4900.

\bibitem{BKMM-lush}
\textsc{K.~Boyko, V.~Kadets, M.~Mart\'{\i}n,
    and J.~Mer\'{\i}},
    Properties of lush spaces and applications to
    Banach spaces with numerical index~$1$. \emph{Studia Math.}
    {190} (2009), 117--133.

\bibitem{BKMW}
\textsc{K.~Boyko, V.~Kadets, M.~Mart\'{\i}n, and
    D.~Werner},
    Numerical index of Banach spaces and duality.
    \emph{Math. Proc. Cambridge Phil. Soc.} {142}
    (2007), 93--102.

\bibitem{ChoLecAnal-II} \textsc{G.~Choquet}, \emph{Lectures on
    Analysis. Volume II: Representation Theory},
    W.~A.~Benjamin, Inc., London, 1969.

\bibitem{D-Mc-P-W}
 \textsc{J.~Duncan, C.~McGregor, J.~Pryce,
    and A.~White},
    The numerical index of a normed space.
    \emph{J. London Math. Soc.} {2} (1970), 481--488.


\bibitem{KMMS}
\textsc{V.~Kadets, M.~Mart\'{\i}n, J.~Mer\'{\i}, and
    V.~Shepelska},
    Lushness, numerical index one and duality.
    \emph{J. Math. Anal. Appl.} 357 (2009), 15--24.




\bibitem{KSSW}
\textsc{V.~M. Kadets, R.~V.~Shvidkoy,
    G.~G.~Sirotkin, and D.~Werner},
     Banach spaces with the Daugavet property.
    \emph{Trans. Amer. Math. Soc.} {352} (2000), 855--873.

\bibitem{KaMaPa}
\textsc{V.~Kadets. M.~Mart\'{\i}n, and R.~Pay\'{a}},
    Recent progress and open questions on the numerical index of
    Banach spaces.
    \emph{Rev. R. Acad. Cien. Serie A. Mat.}
    {100} (2006), 155--182.

\bibitem{KMMP2009}
{\sc V. Kadets, M. Mart\'{\i}n, J. Mer\'{\i}, and R. Pay\'{a}},
Convexity and smoothness of Banach spaces with numerical index one.
\emph{Illinois J. Math.} 53 (2009), no. 1, 163–182.

\bibitem{KMMD2013}
{\sc V. Kadets, M. Mart\'{\i}n, J. Mer\'{\i}, and D. Werner},
Lushness, numerical index~$1$ and the Daugavet property in rearrangement
invariant spaces.
\emph{Can. J.  Math.} 65 (2013), no. 2, 331--348.

\bibitem{KadSW2}
\textsc{V.~M. Kadets, R.~V.~Shvidkoy, and
    D.~Werner},
    Narrow operators and rich subspaces of Banach spaces with the Daugavet property.
    \emph{Studia Math.} {147} (2001), 269--298.

\bibitem{MaOi}
 \textsc{M.~Mart\'{\i}n and T.~Oikhberg},
 An    alternative Daugavet property.
 \emph{J. Math. Anal. Appl.}
    {294} (2004), 158--180.

\bibitem{SanWer14}
{\sc E.~A. S\'anchez P\'erez and D.~Werner},
Slice continuity for operators and the Daugavet property for bilinear maps.
\emph{Funct. Approx.} {50} (2014), 251--269.

\bibitem{Shv1}
\textsc{R.~V.~Shvidkoy},
Geometric aspects of the Daugavet property.
\emph{J. Funct. Anal.} {176} (2000), 198--212.

\bibitem{Wang}
\textsc{R.~Wang},
The numerical radius of Lipschitz operators on Banach spaces.
 \emph{Studia Math.} {209} (2012), no.~1, 43--52.

\bibitem{Wang-Huang-Tan}
\textsc{R.~Wang, X.~Huang, and D.~Tan},
 On the numerical radius of
Lipschitz operators in Banach spaces.
 \emph{J. Math. Anal. Appl.} {411} (2014), no.~1, 1--18.


\end{thebibliography}

\end{document}